\newcommand{\Zer}{\mathrm{Zer}}
\newcommand{\bC}{\mathbf{C}}
\newcommand{\bN}{\mathbf{N}}
\newcommand{\bZ}{\mathbf{Z}}
\newcommand{\bR}{\mathbf{R}}
\newcommand{\bQ}{\mathbf{Q}}
\newcommand{\bK}{\mathbf{K}}
\newcommand{\bD}{\mathbf{D}}
\newcommand{\scalar}[2]{\langle #1,#2\rangle}
\newcommand{\pairing}[2]{\langle #1,#2\rangle}
\newcommand{\Teis}[2]{
   \setlength{\unitlength}{1ex}
   \begin{picture}(2,0)(0,0.4)
      \put(0,1.1){\line(1,0){2}}
      \put(0,0.9){\line(1,0){2}}
      \put(1,1.2){\makebox(0,0)[b]{$\scriptstyle #1$}}
      \put(1,0.8){\makebox(0,0)[t]{$\scriptstyle #2$}}
   \end{picture}}
\newcommand{\Teisssr}[4]{
   \setlength{\unitlength}{1ex}
   \begin{picture}(#3,3)(0,0.4)
      \put(0,1.15){\line(1,0){#3}}
      \put(0,0.85){\line(1,0){#3}}
      \put(#4,1.3){\makebox(0,0)[b]{$#1$}}
      \put(#4,0.7){\makebox(0,0)[t]{$#2$}}
   \end{picture}}
\newtheorem{Theorem}{Theorem}[section]
\newtheorem{Lemma}[Theorem]{Lemma}
\newtheorem{Corollary}[Theorem]{Corollary}
\newtheorem{Remark}[Theorem]{Remark}
\newtheorem{Property}[Theorem]{Property}
\newtheorem{Definition}[Theorem]{Definition}
\newtheorem{Example}[Theorem]{Example}
\newenvironment{proof}[1][Proof]{\textbf{#1.} }{\
\rule{0.5em}{0.5em}}
\title{Quasi-ordinary singularities: tree model, discriminant and irreducibility
\footnotetext{
     \noindent   \begin{minipage}[t]{4.2in}
       {\small
       2000 {\it Mathematics Subject Classification:\/} Primary 32S55;
       Secondary 14H20.\\
       Key words and phrases: quasi-ordinary singularity,
       Newton polytope, discriminant, tree model, irreducibility.\\
       The first-named author was partially supported by the Spanish
       Projects PNMTM 2007-64007 and MTM2012-36917-C03-01.}
       \end{minipage}}}
\author{Evelia R.\ Garc\'{\i}a Barroso and Janusz Gwo\'zdziewicz}
\begin{document}
\maketitle

\begin{abstract}
\noindent Let $f(Y)\in\bK[[X_1,\dots,X_d]][Y]$ be a quasi-ordinary Weierstrass polynomial with coefficients in 
the ring of formal power series over an algebraically closed field of characteristic zero. 
In this paper we study the discriminant $D_f$ of $f(Y)-V$, where $V$ is a new variable. 
We show that the Newton polytope of $D_f$ depends only on contacts between the roots of $f(Y)$.
Then we prove  that  $f(Y)$ is irreducible if and only if 
the Newton polytope of $D_f$ satisfies some arithmetic conditions. 
Finally we generalize these results to quasi-ordinary power series. 
\end{abstract}

\section{Introduction}

\noindent Classically the irreducibility of singular plane curves was studied by resolving the singularity or using  approximate roots (Abhyankar criterion). More recently, in \cite{GB-Gwo} and \cite{Kodai} we use discriminants and the so called {\em Jacobian Newton polygon} introduced by Teissier in \cite{Teissier}. In \cite{Assi} the author gives an irreducibility criterion for {\em quasi-ordinary polynomials}  that generalizes the approach of Abhyankar for plane curves. In \cite{Manuel}  Gonz\'alex Villa characterizes the irreducible quasi-ordinary polynomials in terms of its {\em Newton process} (a way to encode  the resolution).  Previously, in \cite{GB-GP} (Theorem 3) the authors proved that if a power series is irreducible and has a {\em polygonal Newton polytope} (the maximal dimension of its compact faces equals one), then it has  only one compact edge,
which generalizes the case of plane curve germs.

\medskip

\noindent In this note, we study the irreducibility of a quasi-ordinary Weierstrass polynomial
$f(Y)\in\bK[[\underline{X}]][Y]$ from the point of view \cite{GB-Gwo} and \cite{Kodai}. 
We consider the Newton polytope $\Delta(D_f)$ of the discriminant $D_f(\underline{X},V)=\mbox{Discr}_Y(f(Y)-V)$, where $V$ is a new variable.

\medskip 

\noindent 
The main result of the article is Theorem~\ref{Th:6} which states that 
if $p(Y)$, $f(Y)$ are quasi-ordinary Weierstrass polynomials 
such that $\Delta(D_{p})=\Delta(D_{f})$ and $f$~is irreducible 
then $p$ is also irreducible. 

\medskip

\noindent Our tool is the {\em tree model} associated with a quasi-ordinary  polynomial, also called {\em Kuo-Lu} tree.
This combinatorial object is a natural generalization 
of a tree introduced in \cite{Kuo-Lu}.
The tree model $T(f)$ of a polynomial $f(Y)$ 
depends only on contacts between the roots of $f(Y)$.  

\medskip

\noindent In Theorem~\ref{Th:2} we give an explicit formula 
expressing the Newton diagram $\Delta(D_f)$ by $T(f)$.  
Then, after some preparatory work, we characterize in Theorems~\ref{Th:4} and \ref{Th:5}
the tree models of irreducible quasi-ordinary Weierstrass polynomials. 
These are tree models with the highest possible level of symmetry. 

\medskip 
\noindent The proof of Theorem~\ref{Th:6} is based on above results and its idea is to show that if 
$\Delta(D_{p})=\Delta(D_{f})$ and the tree model $T(f)$ has a high level of symmetry, then 
$T(p)$ has the same structure as $T(f)$. 

\medskip

\noindent A consequence of the main result is Theorem~\ref{Theo1} 
which presents an arithmetical test of irreducibility for quasi-ordinary Weierstrass polynomials. 
As an illustration we apply this test to three examples of quasi-ordinary polynomials from \cite{Assi}.

\medskip

\noindent Finally in Section \ref{qo-series} we generalize the notion of the discriminant $D_f(\underline{X},V)$, 
which was previously defined for quasi-ordinary Weierstrass polynomials, to  $Y$-{\em regular quasi-ordinary power series}  and we generalize the criterion of irreducibility to such power series.

\section{Quasi-ordinary Weierstrass polynomials}

\noindent While the term {\em quasi-ordinary} appears in the 60s with Zariski paper \cite{Zariski} and Lipman thesis (\cite{Lipman-tesis}), the study of these objects goes back at least to the paper \cite{Jung} of 
Jung. In this section we recall the notion of {\em quasi-ordinary Weierstrass polynomials}  and some results that will be useful in the development of this note.

\medskip

\noindent Let $\bK$ be an algebraically closed field of characteristic zero and let 
\begin{equation}\label{Eq:1}
f(Y)=Y^n+a_1(X_1,\dots,X_d)Y^{n-1}+\cdots+a_n(X_1,\dots,X_d)\in \bK[[\underline{X}]][Y]  
\end{equation}
be a unitary polynomial with coefficients in the ring of formal power series 
in $\underline{X}=(X_1,\dots,X_d)$.
Such a polynomial is called \emph{quasi-ordinary} if its $Y$-discriminant 
equals 
$X_1^{\alpha_1}\cdots X_d^{\alpha_d}u(\underline{X})$, where $\alpha_i\in\bN$ and $u(\underline{X})$ is a unity in  $\bK[[\underline{X}]]$, that is $u(0)\neq0$.
We call~$f(Y)$ a \emph{Weierstrass polynomial} if $a_i(0)=0$ for all $i=1,\dots,n$.

\begin{Theorem}[Abhyankar-Jung Theorem \cite{Jung}, \cite{P-R}] 
Let $f(Y)\in\bK[[\underline{X}]][Y]$ be a quasi-ordinary Weierstrass polynomial. 
Then there is $k\in\bN\setminus\{0\}$ such that $f(Y)$ has its roots 
in~$\bK[[X_1^\frac{1}{k},\dots,X_d^\frac{1}{k}]]$.
\end{Theorem}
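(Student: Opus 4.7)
My plan is to combine Hensel's lemma with Abhyankar's lemma on tame ramification.

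First I would reduce to the case where $f(Y)$ is irreducible. Since $\bK[[\underline{X}]]$ is Henselian (being complete) with algebraically closed residue field, $f$ factors into monic Weierstrass irreducibles in $\bK[[\underline{X}]][Y]$. If $f = gh$ then $D_f = D_g \, D_h \, \mathrm{Res}_Y(g,h)^2$, and since $D_f = \underline{X}^\alpha u$ is a monomial times a unit, each of the three factors on the right must again be a monomial times a unit. Iterating, every irreducible factor of $f$ is itself quasi-ordinary Weierstrass, so it suffices to treat the irreducible case.

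Next, pass to the fraction field $K = \mathrm{Frac}\,\bK[[\underline{X}]]$, set $L = K[Y]/(f)$, and let $\tilde L$ be the Galois closure of $L/K$. The key geometric reading of the quasi-ordinary hypothesis is that $K \subset \tilde L$ is unramified away from the simple normal crossings divisor $\{X_1 \cdots X_d = 0\}$: the different of any order inside $\tilde L$ containing the roots of $f$ divides a power of $D_f$, and $D_f$ is supported on that divisor. Since $\mathrm{char}\,\bK = 0$, all ramification is automatically tame, and the ramification index at each height-one prime $(X_i)$ is bounded by $[\tilde L : K]$, hence by $n!$.

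I would then invoke Abhyankar's lemma in the regular-local / normal-crossings version: choose $N$ to be a common multiple of the $d$ ramification indices, and make the base change $X_i \mapsto T_i^N$, i.e.\ $\bK[[\underline{X}]] \hookrightarrow \bK[[\underline{T}]]$. This kills every ramification, so $\tilde L \otimes_K \bK((\underline{T}))$ is an \'etale cover of $\mathrm{Spec}\,\bK[[\underline{T}]]$. Because $\bK[[\underline{T}]]$ is strictly Henselian with algebraically closed residue field, every connected finite \'etale cover is trivial, and hence $\tilde L$ embeds into $\bK((X_1^{1/N}, \ldots, X_d^{1/N}))$. Each root of $f$ is integral over $\bK[[\underline{X}]]$ and lies in this field, so by normality of the regular local ring $\bK[[X_1^{1/N}, \ldots, X_d^{1/N}]]$ it lies in the ring itself.

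The delicate step is the rigorous use of Abhyankar's lemma in several variables: one has to check tameness at each component of the normal crossings divisor separately and then glue the local statements into a single global bound on $N$. A more hands-on alternative, closer to the original strategy of Jung, is induction on $d$: the base case $d=1$ is Newton--Puiseux, and for $d>1$ one views $f$ over $\bK((X_d))[[X_1, \ldots, X_{d-1}]][Y]$, applies the inductive hypothesis, and then expands the coefficients as Puiseux series in $X_d$, taking care that the denominators appearing in all expansions admit a common bound coming from $n$.
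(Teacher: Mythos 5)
The paper does not prove this statement at all: it is quoted as the classical Abhyankar--Jung theorem with references to \cite{Jung} and \cite{P-R} and used as a black box, so there is no internal proof to compare yours against. On its own terms your outline is the standard modern argument and is essentially correct: the reduction to the irreducible case via $D_{gh}=D_g D_h\,\mathrm{Res}_Y(g,h)^2$ and unique factorization in $\bK[[\underline{X}]]$ is fine, and the main chain (discriminant supported on $X_1\cdots X_d=0$ implies the Galois closure is unramified off a normal crossings divisor; characteristic zero gives tameness; a Kummer base change $X_i\mapsto T_i^{N}$ with $N$ a common multiple of the ramification indices kills the ramification; triviality of finite \'etale covers of the strictly Henselian ring $\bK[[\underline{T}]]$; integrality plus normality to land in $\bK[[X_1^{1/N},\dots,X_d^{1/N}]]$) is the proof one finds in the literature. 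Two points deserve care. First, the step ``unramified in codimension one over a regular base implies \'etale'' is exactly Zariski--Nagata purity of the branch locus; this is the real engine behind the several-variable Abhyankar lemma and you should name it rather than leave it implicit, since without it you only control the height-one primes. Second, the ``more hands-on alternative'' by induction on $d$ via Newton--Puiseux in one variable at a time does \emph{not} go through naively: controlling a common denominator for the Puiseux expansions of the coefficients after the first induction step is precisely the difficulty that makes the theorem nontrivial, and it is the issue the cited paper \cite{P-R} is devoted to resolving. Presenting that induction as an easy fallback would be a genuine gap; the purity-based argument is the one to keep.
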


\noindent For every $d$-tuple $\alpha=(\alpha_1,\dots,\alpha_d)\in\bQ_{\geq0}^d$ denote 
$\underline{X}^{\alpha}=X_1^{\alpha_1}\cdots X_d^{\alpha_d}$.\\ 
\noindent Let $\Zer f=\{Y_1(\underline{X}),\dots, Y_n(\underline{X})\}$ be the set of roots of $f(Y)$
in $\bK[[X_1^\frac{1}{k},\dots,X_d^\frac{1}{k}]]$.

\noindent As the differences of roots divide the discriminant, we have for $i\neq j$

\[
Y_{ij}(\underline{X}):=Y_i(\underline{X})-Y_j(\underline{X})=\underline{X}^{\lambda_{ij}}u_{ij}(\underline{X}),\;\;\;
\hbox{\rm for some  }\lambda_{ij}\in(1/k)\bN^d,  u_{ij}(0)\neq0.
\]

\medskip

\noindent In the next we will write $Y_j$ instead of $Y_j(\underline{X})$ and $Y_{ij}$ instead of $Y_{ij}(\underline{X})$. We call $O\big(Y_i,Y_j\big):=\lambda_{ij}$ \emph{the contact between $Y_i$ and $Y_j$}. We put $O\big(Y_i,Y_i\big)=+\infty$.
\medskip

\noindent Let us introduce a partial order in $\bQ^d$: $(\alpha_1,\ldots,\alpha_d) \leq (\beta_1,\ldots,\beta_d) $ if and only if  $\alpha_i\leq \beta_i$ for all $i=1,\ldots,d$. 
Let us put by convention $\alpha<+\infty$ for $\alpha\in\bQ^d$.

\medskip
\begin{Lemma} [\cite{B-M}, Lemma 4.7]
\label{LE1}
Let $\alpha$, $\beta$, $\gamma\in\bN^d$ and let $a(\underline{X})$, $b(\underline{X})$, $c(\underline{X})$ be invertible elements 
of $\bK[[\underline{X}]]$. If 
$$ a(\underline{X})\underline{X}^{\alpha}-b(\underline{X})\underline{X}^{\beta}=c(\underline{X})\underline{X}^{\gamma}, $$ 
then either $\alpha\leq \beta$ or $\beta \leq \alpha$.
\end{Lemma}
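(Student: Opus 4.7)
\noindent The plan is to argue by contradiction. Suppose $\alpha$ and $\beta$ are incomparable, so there exist indices $i,j$ with $\alpha_i>\beta_i$ and $\alpha_j<\beta_j$. My first move is to reduce to the cleaner case in which $\alpha$ and $\beta$ have disjoint supports, meaning $\min(\alpha_k,\beta_k)=0$ for every $k$. For this, set $\delta=(\min(\alpha_1,\beta_1),\dots,\min(\alpha_d,\beta_d))\in\bN^d$ and write $\alpha=\delta+\alpha'$, $\beta=\delta+\beta'$; incomparability forces both $\alpha',\beta'$ to be nonzero and to have disjoint supports. The support of the left-hand side lies in $(\alpha+\bN^d)\cup(\beta+\bN^d)\subseteq\delta+\bN^d$, while the monomial $\underline{X}^\gamma$ has nonzero coefficient $c(0)$ on the right, and hence also on the left, so $\gamma\geq\delta$. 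Dividing through by $\underline{X}^\delta$ yields an equation of the same form with $(\alpha',\beta',\gamma-\delta)$ in place of $(\alpha,\beta,\gamma)$, so I may assume from the outset that $\min(\alpha_k,\beta_k)=0$ for all $k$ and both $\alpha,\beta$ are nonzero.

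\medskip

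\noindent With this reduction, pick indices $i,j$ with $\alpha_i>0=\beta_i$ and $\beta_j>0=\alpha_j$ and substitute $X_i=0$ in the equation. Because $\alpha_i>0$, the term $a\underline{X}^\alpha$ vanishes, leaving
\[
-\,b(\underline{X})\big|_{X_i=0}\,\prod_{k\neq i}X_k^{\beta_k}
\;=\;c(\underline{X})\big|_{X_i=0}\cdot X_i^{\gamma_i}\big|_{X_i=0}\,\prod_{k\neq i}X_k^{\gamma_k}.
\]
Since $b(0)\neq 0$, the restriction $b|_{X_i=0}$ is a nonzero power series, and because $\beta_i=0$, the left-hand side is a nonzero monomial times a unit in the remaining $d-1$ variables. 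Matching both sides forces $\gamma_i=0$ and $\gamma_k=\beta_k$ for all $k\neq i$; in particular $\gamma_j=\beta_j>0$.

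\medskip

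\noindent I would then run the symmetric argument with $X_j=0$: this kills $b\underline{X}^\beta$ instead and, by the same matching, yields $\gamma_j=0$ and $\gamma_k=\alpha_k$ for $k\neq j$. The two conclusions directly contradict each other (for example $\gamma_j=\beta_j>0$ versus $\gamma_j=0$), so the incomparability of $\alpha$ and $\beta$ is untenable. I do not foresee any serious obstacle: the argument is essentially Newton-polyhedron bookkeeping, and the only points requiring care are verifying that the equation is indeed divisible by $\underline{X}^\delta$ in $\bK[[\underline{X}]]$ and that restricting a unit to a coordinate hyperplane produces a nonzero power series (both immediate from $a(0),b(0),c(0)\neq 0$). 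Morally, the content is that two unit-times-monomial expressions with disjoint-support exponents cannot cancel into a single unit-times-monomial, and the two coordinate substitutions make this transparent.
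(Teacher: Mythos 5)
Your argument is correct and complete. Note that the paper does not prove this lemma at all: it is quoted verbatim from Bierstone--Milman (\cite{B-M}, Lemma 4.7) and used as a black box, so there is no in-paper proof to compare against. Your two-step strategy --- first factoring out $\underline{X}^{\delta}$ with $\delta=\min(\alpha,\beta)$ taken componentwise (which is legitimate since $\bK[[\underline{X}]]$ is a domain and the verification $\gamma\geq\delta$ via the coefficient $c(0)$ of $\underline{X}^{\gamma}$ is exactly right), and then playing the two coordinate restrictions $X_i=0$ and $X_j=0$ against each other --- is a clean, elementary proof. The only step that deserves the explicit justification you partly gloss over is the ``matching'' in the restricted equation: once $\gamma_i=0$ is forced, you have an identity of the form $(\mbox{unit})\cdot(\mbox{monomial})=(\mbox{unit})\cdot(\mbox{monomial})$ in $d-1$ variables, and equality of the monomials follows because a monomial is a unit in a formal power series ring only if it is $1$; this gives $\gamma_j=\beta_j>0$, which is what collides with $\gamma_j=0$ from the symmetric substitution. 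One cosmetic remark: the disjointness of the supports of $\alpha'$ and $\beta'$ is automatic from the definition of $\delta$ as the componentwise minimum; incomparability is only needed to guarantee that both are nonzero.
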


\noindent Applying Lemma \ref{LE1} to $Y_{ik}$, $Y_{jk}$ and $Y_{ij}$ we see that for every $Y_i,Y_j,Y_k\in \Zer f$ one has $O(Y_i,Y_k)\leq O(Y_j,Y_k)$ or $O(Y_i,Y_k)\geq O(Y_j,Y_k)$.

\noindent Moreover, we have the {\em strong triangular inequality:} 

\[
O(Y_i,Y_j)\geq \min \{O(Y_i,Y_k), O(Y_j,Y_k)\}.
\]

\noindent Consequently for every subset $A\subset\Zer f$ the set of contacts 
between elements of~$A$ has 
the smallest element. 

\section{The tree model $T(f)$}
\label{tree-model}
In this section we construct the tree model $T(f)$ which encodes the contacts between the roots of $f(Y)$. 
Given  $h\in\bQ_{\geq0}^d$ we write  $Y_i \equiv Y_j\bmod h^{+}$  if $O(Y_i,Y_j)>h$.\\

\noindent Let $B=\Zer f$ and let $h(B)$ be the minimal contact between the elements of~$B$. 
We represent $B$ as a horizontal bar and call $h(B)$ the \emph{height} of $B$. 
The equi\-valence relation $\equiv \bmod  \,h(B)^{+}$  divides $B$ into equivalence classes $B_1$, \dots, $B_k$. 
 From $B$ we draw $k$ vertical segments and at the end of the $i$-th segment we place
a horizontal bar representing $B_i$. The bar $B_i$ is called a {\em postbar} of $B$ and we write $B\perp B_i$.  
For each $B_i$ we repeat this construction recursively. We do not draw the bars of infinite height. 

\medskip

\noindent Remark that for every bar $\bar B\in T(f)$ there exists a unique sequence  $B\perp B'\perp B''\perp \cdots \perp \bar B$ starting from the bar $B$ of the minimal height. 

\medskip

\noindent Let $\# A$ denotes the number of elements of the set $A$. 

\begin{Definition}
\label{page2}
To every bar $B\in T(f)$ we associate a $d$-tuple $q(B)\in\bQ_{\geq0}^d$ in the next way:
\begin{enumerate}
\item [(i)]  If $B$ is the bar of the lowest height then $q(B)= \#B \cdot h(B)$. 
\item [(ii)]  If $B\perp B'$ then $q(B')=q(B)+\#B'(h(B')-h(B))$.
\item [(iii)] If $h(B)$ is infinite then $q(B)$ is also infinite.
\end{enumerate}
\end{Definition}

\begin{Remark}
For $d=1$,  $q(B)$ becomes a rational number. In \cite{GB-Gwo} and \cite{GB-Gwo-L} this number was defined by using the order of certain substitutions. However Lemma 2.7 in \cite{GB-Gwo-L} states that $q(B)$ satisfies the recursive formula of Definition \ref{page2}. Hence, that definition coincides with the present one.
\end{Remark}

\medskip
\noindent 
Let $q=(q_1,\dots,q_d)\in\bQ_{\geq0}^d$ and let 
$k$ be a positive integer. We define the {\em elementary Newton polytope} 
$$ 
\Bigl\{\Teisssr{q}{k}{3}{1.5}\Bigr\}:=
\mbox{Convex Hull}\;\bigl( \{\, (q_1,\dots,q_d,0), (0,\dots,0,k)\,\}
+\bR_{\geq0}^{d+1}\bigr)\;.
$$

\begin{Example} The elementary Newton polytope $\left\{\Teisssr{(2,1)}{4}{4}{2}\right\}$ is
\begin{center}
\begin{tikzpicture}[scale=2.5]
\draw [->](0,0,0) -- (2.2,0,0); \draw[->](0,0,0) -- (0,1.5,0) ; \draw(0,0,0) -- (0,0,-1);
\draw[thick] (0,0,0) -- (0,1,0) node[left] {(0,0,4)};
\draw[dashed](0,1,0) -- (0,1,-2);
\draw[very thick](0,1,0)-- (2,1,0);
\draw[very thick](0,1,0)-- (0,1.5,0);
\draw[very thick] (0,1,0) -- (0.5,0,-0.25) node[left] {(2,1,0)};
\draw[very thick](0.5,0,-0.25) -- (2,0,-0.25);
\draw[dashed](0.5,0,-0.25) -- (0.5,0,-3);
\draw[->][dashed](0,0,-1) -- (0,0,-3);
\node[draw,circle,inner sep=2pt,fill=black] at (0,1,0) {};
\node[draw,circle,inner sep=2pt,fill=black] at (0.5,0,-0.25) {};
\end{tikzpicture}
\end{center}
\end{Example}

\noindent With each tree model $T$ we associate the Newton polytope 
\begin{equation}
\label{pol}
\Delta_T= \sum_{B\in \widetilde T} \left\{\Teisssr{(t(B)-1)q(B)}{t(B)-1}{16}{8}\right\},
\end{equation}

\noindent where $\widetilde T$ is the set of bars 
 $B\in T$ of finite height, $t(B)$ is the number of postbars of $B$ and the sum denotes the Minkowski sum (see \cite{Ewald}, Chapter 4, Definition 1.1).

\section{Newton polytope of the discriminant}

\noindent Let $h(\underline{X})=\sum_{\underline{i}}a_{\underline{i}}\underline{X}^{\underline{i}}$ be a power series in $s$ variables and coefficients in $\bK$. The {\em Newton polytope} $\Delta(h)$ of $h$ is the convex hull of the set
$\displaystyle \bigcup_{a_{\underline{i}}\neq 0} \{ \underline{i}+\bR_{\geq0}^{s}\}$. In two variables case the Newton polytope is called the {\em Newton diagram}.

\medskip

\noindent If $g(Y)\in \bC\{X_1\}[Y]$ is a Weierstrass polynomial then the Newton diagram of the $Y$-discriminant  of 
$g(Y)-V$, where $V$ is a new variable, is determined by the tree model of $g$ (see Lemma \ref{L:3}).  In this section we generalize this result to quasi-ordinary Weierstrass polynomials in $\bK[[X_1,\ldots,X_d]][Y]$.

\begin{Theorem}\label{Th:2}
 Let $f(Y)\in\bK[[\underline{X}]][Y]$ be a quasi-ordinary Weierstrass polynomial 
and let $D_f(\underline{X},V)$ be the $Y$-discriminant 
of the polynomial $f(Y)-V$, where $V$ is a new variable.
Then $\Delta(D_f)=\Delta_{T(f)}$.
\end{Theorem}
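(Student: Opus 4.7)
The plan is to realize $D_f$ as a product of critical values of $f(Y)$ and to match each factor against a bar of $T(f)$. Since $f(Y)-V$ is monic of degree $n$ in $Y$, the resultant formula yields
\[
D_f(\underline X,V) \;=\; c\prod_{y:\,f'(y)=0}(f(y)-V),
\]
where $c$ is a nonzero constant and the product runs over the $n-1$ roots of $f'(Y)$ in the algebraic closure of the Puiseux field containing $\Zer f$. The task is then to locate these critical points and to evaluate $f$ at each.

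The key structural lemma I would establish is that every critical point $y$ of $f$ is naturally \emph{attached} to a unique finite-height bar $B\in\widetilde T$, namely the deepest bar for which $O(y,Y_i)\geq h(B)$ holds for every $Y_i$ in the cluster $B$; and that exactly $t(B)-1$ critical points are attached to $B$. A bookkeeping on postbars gives $\sum_{B\in\widetilde T}(t(B)-1)=n-1$, matching the total critical count. The existence of $t(B)-1$ critical points per bar would be proved by induction on the depth of $T(f)$: at each step one uses a Tschirnhaus translation to isolate a single cluster, reducing the initial form of $f'$ over $\bK$ to a polynomial with $t(B)-1$ simple roots, and then lifts these to genuine Puiseux roots of $f'$ via Hensel's lemma. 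This enumeration is the main technical obstacle.

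For $y$ attached to $B$, the definition yields $O(y,Y_i)=h(B^\star)$, where $B^\star$ is the smallest bar of $T(f)$ whose cluster contains both $y$ and $Y_i$. Summing along the chain $B_0\perp B_1\perp\cdots\perp B_\ell=B$ from the root bar down to $B$,
\[
\ord f(y) \;=\; \sum_{i=1}^n O(y,Y_i) \;=\; \#B_\ell\cdot h(B_\ell)+\sum_{k=0}^{\ell-1}(\#B_k-\#B_{k+1})\,h(B_k),
\]
and the recursion of Definition~\ref{page2} telescopes this sum to $q(B)$. Since each factor $y-Y_i$ is a Puiseux monomial times a unit, one obtains $f(y)=\underline X^{q(B)}u_y(\underline X)$ with $u_y(0)\neq 0$.

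Substituting into the resultant formula yields
\[
D_f(\underline X,V) \;=\; c\prod_{B\in\widetilde T}\;\prod_{y\text{ attached to }B}\bigl(\underline X^{q(B)}u_y(\underline X)-V\bigr).
\]
Each factor has nonzero extreme coefficients at the vertices $(q(B),0)$ and $(0,1)$ of its Newton polytope, so no cancellation can occur along any supporting hyperplane of the Minkowski sum of these polytopes. Hence the Newton polytope of $D_f$ equals that Minkowski sum, which is precisely $\sum_{B\in\widetilde T}\bigl\{\Teisssr{(t(B)-1)q(B)}{t(B)-1}{16}{8}\bigr\}=\Delta_{T(f)}$.
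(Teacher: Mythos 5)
Your factorization $D_f(\underline X,V)=c\prod_{f'(y)=0}(f(y)-V)$ and the per-bar bookkeeping are exactly the shape of the one-variable proof, and your telescoping computation identifying $\sum_i O(y,Y_i)$ with $q(B)$ is correct as far as it goes. The paper, however, deliberately does not run this argument in $d$ variables: it reduces to $d=1$ via monomial substitutions $X_i=T^{c_i}$ (Lemma~\ref{L:2}, Corollary~\ref{C:1}), compares support functions of the two polytopes in a generic rational direction, and invokes the known one-variable statement (Lemma~\ref{L:3}). The reason it takes this detour is precisely the gap in your plan.

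The gap: for $d\geq 2$ the roots $y$ of $f'(Y)$ are not known to lie in $\bK[[X_1^{1/k},\dots,X_d^{1/k}]]$ --- the derivative of a quasi-ordinary polynomial is in general not quasi-ordinary. Consequently the contact $O(y,Y_i)$, the attachment of $y$ to a bar, the assertion that each $y-Y_i$ is ``a Puiseux monomial times a unit,'' and hence the conclusion $f(y)=\underline X^{q(B)}u_y(\underline X)$ with $u_y(0)\neq0$, are all undefined or unproved at the point where you use them. Your proposed remedy (Tschirnhaus translation, initial forms, Hensel lifting) is the right instinct, but in several variables the lifted critical points live a priori only in an algebraic closure of the fraction field; showing that the critical \emph{values} $f(y)$ are nevertheless monomials times units, and that exactly $t(B)-1$ of them sit over each bar, is essentially the ``decomposition in bunches'' theorem of \cite{GB-GP} --- a substantial result in its own right, not a routine induction. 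Unless you import that theorem, the proof is incomplete; if you do import it, your route goes through and is genuinely different from (and heavier than) the paper's reduction to one variable, which is engineered exactly so that one never has to analyze the roots of $f'$ in several variables.
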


\medskip

\noindent  Set $f(Y)=Y^n+a_1(X_1,\dots,X_d)Y^{n-1}+\cdots+a_n(X_1,\dots,X_d)\in \bK[[\underline{X}]][Y]$. Let  $X_1=T^{c_1}$, \dots , $X_d=T^{c_d}$ be  monomial substitutions, where $T$ is a new variable and $c_i$ are positive integers.
Set $\underline{c}=(c_1,\dots,c_d)$ and let 
\begin{equation}\label{Eq:2} 
g(Y)=Y^n+a_1(T^{c_1},\dots,T^{c_d})Y^{n-1}+\cdots+a_n(T^{c_1},\dots,T^{c_d})\in \bK[[T]][Y]. 
\end{equation}

\noindent Remark that if $c_i\geq n$ for $i=1,\dots, d$ 
then the order of $a_i(T^{c_1},\dots,T^{c_d})$ is bigger than 
or equal to $n$ for $i=1,\dots, n$. 
In particular the initial form of $g$, treated as a power series in variables 
$T$ and $Y$, is not divisible by $T$ since $Y^n$ is one of its terms.

\begin{Lemma}\label{L:2}
There is  a bijective correspondence between the bars of $T(f)$ and the bars of $T(g)$. 
Moreover, if $B$ and $\bar B$ are corresponding bars of $T(f)$ and $T(g)$ respectively then 
$h(\bar B) = \scalar{\underline{c}}{h(B)}$ and  $q(\bar B) = \scalar{\underline{c}}{q(B)}$,
where $\scalar{\cdot}{\cdot}$ denotes the standard scalar product. 
\end{Lemma}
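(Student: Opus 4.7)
The plan is to exhibit an explicit bijection by tracking how the substitution $X_i=T^{c_i}$ acts on roots, and then on the equivalence classes used to build the tree. I would start by verifying that the ring map $\phi: \bK[[X_1^{1/k},\dots,X_d^{1/k}]]\to\bK[[T^{1/k}]]$ sending $X_i^{1/k}\mapsto T^{c_i/k}$ carries $\Zer f$ bijectively onto $\Zer g$. If $Y_i-Y_j=\underline{X}^{\lambda_{ij}}u_{ij}(\underline{X})$ with $u_{ij}(0)\neq 0$, then $\phi(Y_i)-\phi(Y_j)=T^{\scalar{\underline{c}}{\lambda_{ij}}}u_{ij}(T^{\underline{c}})$, and $u_{ij}(T^{\underline{c}})$ is a unit of $\bK[[T]]$ since $u_{ij}(0)\neq 0$. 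Because $f$ is Weierstrass and the $Y_i$ are distinct, every $\lambda_{ij}$ (for $i\neq j$) is a nonzero element of $(1/k)\bN^d$, so $\scalar{\underline{c}}{\lambda_{ij}}>0$ (using the strict positivity of the $c_i$) and the images $\phi(Y_i)$ remain pairwise distinct. Applied to the $Y$-discriminant of $f$, the same substitution shows that $g$ is a one-variable quasi-ordinary Weierstrass polynomial, hence $T(g)$ is defined, and the computation above gives
\[
O(\phi(Y_i),\phi(Y_j))=\scalar{\underline{c}}{O(Y_i,Y_j)}.
\]

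Next, I would set up the bar correspondence $\bar B:=\phi(B)$. The key observation is that for $\lambda,\mu\in(1/k)\bN^d$ with $\lambda\leq\mu$ in the componentwise order, strictness $\lambda<\mu$ is equivalent to $\scalar{\underline{c}}{\lambda}<\scalar{\underline{c}}{\mu}$, since each $c_i>0$. Taking $\lambda=h(B)$ and $\mu=\lambda_{ij}$ for $i,j\in B$, one sees that the minimum of $\{\scalar{\underline{c}}{\lambda_{ij}}:i,j\in B\}$ is realized at the same pair where the partial-order minimum $h(B)$ is realized; hence $h(\bar B)=\scalar{\underline{c}}{h(B)}$, and the equivalence relation $\equiv\bmod h(B)^{+}$ on $B$ transforms into $\equiv\bmod h(\bar B)^{+}$ on $\bar B$, so the partitions into postbars agree. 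Iterating down the tree gives the stated bijection together with the equality of heights; the formula $q(\bar B)=\scalar{\underline{c}}{q(B)}$ then follows by a straightforward induction on the depth of $B$ in $T(f)$ from Definition~\ref{page2} and the linearity of $\scalar{\underline{c}}{\cdot}$.

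The main obstacle is the strict-order-preservation step above: one has to ensure that two contacts of $f$ that are distinct in the componentwise order do not collapse to the same value under $\scalar{\underline{c}}{\cdot}$, which would otherwise merge postbars sitting at the same level of the tree. This is precisely what positivity of every $c_i$ buys us.
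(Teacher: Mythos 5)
Your proposal is correct and follows essentially the same route as the paper: the substitution $X_i=T^{c_i}$ carries roots to roots, contacts transform by $\lambda\mapsto\scalar{\underline{c}}{\lambda}$, and the formula for $q$ follows by applying $\scalar{\underline{c}}{\cdot}$ to the recursion in Definition~\ref{page2}. The only difference is that you spell out the step the paper labels ``clearly,'' namely that positivity of the $c_i$ makes $\scalar{\underline{c}}{\cdot}$ strictly order-preserving on the componentwise-comparable set of contacts within a bar, so the equivalence classes mod $h(B)^{+}$ map exactly onto those mod $h(\bar B)^{+}$ — a worthwhile elaboration, since this is where the argument could fail if some $c_i$ were zero.
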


\noindent \begin{proof}
Set $T^{\underline{c}}=(T^{c_1},\dots,T^{c_d})$. 
Clearly $\Zer g=\{Y_1(T^{\underline{c}}),\dots,Y_n(T^{\underline{c}})\}$
and $O(Y_i(T^{\underline{c}}),Y_j(T^{\underline{c}}))=\scalar{\underline{c}}
{O(Y_i(\underline{X}),Y_j(\underline{X}))}$ for $i\neq j$.
Hence every bar $B=\{Y_{i_1}(\underline{X}),\dots,Y_{i_k}(\underline{X})\}$ of $T(f)$ yields the bar 
$\bar B=\{Y_{i_1}(T^{\underline{c}}),\dots,Y_{i_k}(T^{\underline{c}})\}$ of $T(g)$ of height 
$\scalar{\underline{c}}{h(B)}$. 
Taking the scalar product by $\underline{c}$ of the equations appearing in  Definition \ref{page2}  we get the second part of the lemma. 
\end{proof}

\medskip
\noindent Further, in this section,  we write $(\underline{x},y)$ for $(x_1,\dots,x_d,y)\in\bR^{d+1}$.

\medskip

\begin{Corollary}\label{C:1}
Let  ${\pi}:\bR^{d+1}\to\bR^2$  be the linear mapping given by 
$(\underline{x},x_{d+1})\mapsto (\scalar{\underline{c}}{\underline{x}},x_{d+1})$. 
Then $\pi(\Delta_{T(f)})=\Delta_{T(g)}$.
\end{Corollary}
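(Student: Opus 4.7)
The plan is to reduce the corollary to three ingredients: the bar-to-bar dictionary from Lemma~\ref{L:2}, the compatibility of $\pi$ with the elementary Newton polytopes appearing in~(\ref{pol}), and the fact that linear maps commute with Minkowski sums.

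First I would observe that the bijection $B\mapsto \bar B$ from Lemma~\ref{L:2} is an isomorphism of trees, since the relation $B\perp B'$ is defined purely in terms of the equivalence classes under $\equiv\bmod h(B)^+$, and these classes are preserved under the substitution $\underline{X}\mapsto T^{\underline{c}}$. Consequently the number of postbars matches: $t(\bar B)=t(B)$. Combining this with the formula $q(\bar B)=\scalar{\underline{c}}{q(B)}$ from Lemma~\ref{L:2}, the two vertices that define the elementary Newton polytope for $\bar B$ are
\[
\bigl((t(B)-1)\scalar{\underline{c}}{q(B)},\,0\bigr)\ \ \text{and}\ \ (0,\,t(B)-1),
\]
which are precisely the images under $\pi$ of the vertices $\bigl((t(B)-1)q(B),0\bigr)$ and $(0,\dots,0,t(B)-1)$ defining the elementary Newton polytope for $B$.

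Next I would check that $\pi$ respects the construction of each elementary Newton polytope. Since each $c_i$ is a positive integer, $\pi(\bR_{\geq 0}^{d+1})\subseteq \bR_{\geq 0}^2$; conversely, any $(a,b)\in\bR_{\geq 0}^2$ is the image of $(a/c_1,0,\ldots,0,b)$, so $\pi(\bR_{\geq 0}^{d+1})=\bR_{\geq 0}^2$. Because $\pi$ is linear it commutes with taking convex hulls and with Minkowski sums, hence
\[
\pi\!\left(\left\{\Teisssr{(t(B)-1)q(B)}{t(B)-1}{16}{8}\right\}\right)
=\left\{\Teisssr{(t(\bar B)-1)q(\bar B)}{t(\bar B)-1}{16}{8}\right\}.
\]

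Finally, applying $\pi$ to the defining identity~(\ref{pol}) and using linearity once more,
\[
\pi(\Delta_{T(f)})=\sum_{B\in\widetilde{T(f)}}\pi\!\left(\left\{\Teisssr{(t(B)-1)q(B)}{t(B)-1}{16}{8}\right\}\right)
=\sum_{\bar B\in\widetilde{T(g)}}\left\{\Teisssr{(t(\bar B)-1)q(\bar B)}{t(\bar B)-1}{16}{8}\right\}
=\Delta_{T(g)},
\]
where the middle equality uses the bijection $B\leftrightarrow \bar B$ and the fact that a bar has infinite height if and only if its image does, so $\widetilde{T(f)}$ is in bijection with $\widetilde{T(g)}$. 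The only subtle point is verifying that $\pi$ really sends the recession cone $\bR_{\geq 0}^{d+1}$ onto $\bR_{\geq 0}^2$, but as noted above this is immediate from positivity of the $c_i$; otherwise the argument is a direct unwinding of definitions.
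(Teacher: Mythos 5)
Your proof is correct and follows essentially the same route as the paper's: the paper derives the corollary from Lemma~\ref{L:2} together with the two observations that $\pi$ maps each elementary Newton polytope $\Bigl\{\Teisssr{q}{k}{2}{1}\Bigr\}$ to $\Bigl\{\Teisssr{\scalar{\underline{c}}{q}}{k}{5}{2.5}\Bigr\}$ and that $\pi$ commutes with Minkowski sums. You merely spell out the details the paper leaves implicit (that the bijection of Lemma~\ref{L:2} preserves the number of postbars and finiteness of heights, and that $\pi(\bR_{\geq0}^{d+1})=\bR_{\geq0}^2$), all of which are verified correctly.
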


\noindent \begin{proof}
Corollary \ref{C:1} follows from Lemma~\ref{L:2} and two easy observations: \\
$\pi\Bigl(\Bigl\{\Teisssr{q}{k}{2}{1}\Bigr\}\Bigr)=\left\{\Teisssr{\scalar{\underline{c}}{q}}{k}{5}{2.5}\right\}$
for every elementary Newton polytope  $\Bigl\{\Teisssr{q}{k}{2}{1}\Bigr\}\subset\bR_{\geq0}^{d+1}$,
\and 
$\pi(\Delta_1+\Delta_2)=\pi(\Delta_1)+\pi(\Delta_2)$ 
for all Newton polytopes $\Delta_1,\Delta_2\subset\bR_{\geq0}^{d+1}$.
\end{proof}

\medskip

\begin{Lemma}\label{L:3}
Let $D_g(T,V)$ be the $Y$-discriminant of the polynomial $g(Y)-V$, where $V$ is a new variable.
Assume that $T$ does not divide the initial form of $g$ treated as a power series in two variables.
Then $\Delta(D_g)=\Delta_{T(g)}$.
\end{Lemma}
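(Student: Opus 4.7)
My starting point is the resultant identity
$$D_g(T,V) \;=\; \mathrm{Res}_Y\bigl(g(Y)-V,\, g'(Y)\bigr) \;=\; \kappa\cdot\prod_{j=1}^{n-1}\bigl(V - g(\beta_j(T))\bigr),$$
where $\kappa\in\bK^*$ and $\beta_1(T),\dots,\beta_{n-1}(T)$ are the roots of $g'(Y)$ in the Puiseux closure of $\bK((T))$.  Set $e_j:=\mathrm{ord}_T g(\beta_j(T))$.  Since the Newton diagram of a product equals the Minkowski sum of the diagrams of the factors, and each binomial $V-g(\beta_j)$ has elementary Newton diagram $\Bigl\{\Teisssr{e_j}{1}{3}{1.5}\Bigr\}$, I get
$$\Delta(D_g) \;=\; \sum_{j=1}^{n-1}\Bigl\{\Teisssr{e_j}{1}{3}{1.5}\Bigr\}.$$

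The core of the argument is the following \emph{Key Claim}: for each bar $B\in\widetilde{T(g)}$, there are exactly $t(B)-1$ indices $j$ with $e_j=q(B)$.  Granting this, the Minkowski sum of $t(B)-1$ copies of the segment $\Bigl\{\Teisssr{q(B)}{1}{3}{1.5}\Bigr\}$ equals $\Bigl\{\Teisssr{(t(B)-1)q(B)}{t(B)-1}{5}{2.5}\Bigr\}$ (Minkowski-scaling a convex body by $k$ produces its dilation), so summing over all bars recovers precisely formula~(\ref{pol}) for $\Delta_{T(g)}$.  The total count is consistent since $\sum_{B\in\widetilde{T(g)}}(t(B)-1)=n-1=\deg g'$, the standard identity for a rooted tree with $n$ leaves.

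I would prove the Key Claim by induction on the depth of $T(g)$.  Let $B_0=\Zer g$ be the lowest bar, of height $h_0=h(B_0)$, with top postbars $B_1,\dots,B_r$, and factor $g=g_1\cdots g_r$ so that $\Zer g_i=B_i$.  Writing $g'/g=\sum_{i=1}^{r} g_i'/g_i$, I split the roots of $g'$ into (a)~roots of some $g_i'$, which by induction applied to each subtree $T(g_i)$ account for all bars strictly above $B_0$; and (b)~``root-level'' critical points $\beta$ satisfying $\mathrm{ord}_T(\beta-Y_j)=h_0$ for every $Y_j\in\Zer g$.  A leading-coefficient computation reduces (b) to a polynomial equation of degree $r-1$ in the top coefficient of $\beta$, producing exactly $r-1=t(B_0)-1$ such critical points, and for each of them $\mathrm{ord}_T g(\beta)=\sum_{j=1}^{n} h_0=n h_0=q(B_0)$.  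The recursion $q(B')=q(B)+\#B'(h(B')-h(B))$ from Definition~\ref{page2} then correctly tracks how $\mathrm{ord}_T g(\beta)$ decomposes into the contributions from $g_i(\beta)$ and $\prod_{\ell\neq i}g_\ell(\beta)$ when $\beta$ is a critical point lying in the subtree of some $B_i$.

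The main obstacle is step~(b): verifying that the leading-coefficient equation for root-level critical points is separable of exact degree $r-1$, so that one obtains precisely $t(B_0)-1$ simple roots with the expected contact order $h_0$.  This is where the hypothesis $T\nmid\ini g$ is essential---it guarantees $h_0\geq 1$ and forbids degenerate cancellations at $T$-order zero, so that the tree $T(g)$ faithfully encodes the combinatorics matching the Newton data of $g'$.
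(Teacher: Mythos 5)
Your outer reduction is exactly the one used in the proof the paper points to (the paper does not reprove this lemma; it cites [GB-Gwo1], p.~691): factor $D_g(T,V)=\kappa\prod_{j}(V-g(\beta_j(T)))$ over the roots $\beta_j$ of $g'$, turn the product into a Minkowski sum of elementary polygons with vertices $(e_j,0)$ and $(0,1)$, and reduce everything to the claim that each finite bar $B$ of $T(g)$ accounts for exactly $t(B)-1$ indices $j$ with $e_j=q(B)$. That Key Claim is the Kuo--Lu lemma, your consistency check $\sum_B(t(B)-1)=n-1$ is correct, and up to this point the proposal is sound and coincides with the cited argument.

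The gap is in your proof of the Key Claim. After factoring $g=g_1\cdots g_r$ according to the classes mod $h_0^{+}$, a root of $g_i'$ is in general \emph{not} a root of $g'=\sum_i g_i'\prod_{\ell\neq i}g_\ell$, so your case (a) --- ``roots of $g'$ which are roots of some $g_i'$'' --- does not describe a subset of $\Zer g'$, and the induction cannot be applied to the subtrees $T(g_i)$ in the way you state. The correct statement, which is the actual content of the Kuo--Lu lemma, is a counting one: exactly $\#B_i-1=\deg g_i'$ roots of $g'$ have contact $>h_0$ with the elements of $B_i$ (equivalently, their coefficient at level $h_0$ equals the common coefficient $d_i$ of the class $B_i$), and these polar roots distribute over the bars of $T(g_i)$ with the same multiplicities as the roots of $g_i'$ do. Proving this requires comparing $g'$ with $g_i'\prod_{\ell\neq i}g_\ell$ after subtracting a suitable truncation of a root in $B_i$ (or an equivalent Newton-polygon count); it does not follow from the leading-coefficient equation $P'(c)=0$ with $P(c)=\prod_i(c-d_i)^{m_i}$, which only yields the $r-1$ root-level critical points --- your step (b), which is fine in characteristic zero, since $P'$ vanishes to order exactly $m_i-1$ at each $d_i$, the residual factor has degree $r-1$ with roots avoiding the $d_i$, whence $\ord_T g(\beta)=nh_0=q(B_0)$ for each such $\beta$ --- together with the total multiplicity $\sum_i(m_i-1)$ concentrated at the $d_i$. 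Either quote the Kuo--Lu distribution lemma as a known result, which is what the cited proof effectively does, or supply the missing argument; as written, the induction does not close.
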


\medskip

\noindent Lemma \ref{L:3} was proved in \cite{GB-Gwo} (see page 691)  for Weierstrass polynomials in $\bC\{T\}[Y]$. However its proof can be generalized without any problems to Weierstrass polynomials with coefficients  in the ring  $\bK[[T]]$. 

\medskip

\noindent \begin{proof}[Proof of Theorem \ref{Th:2}] 
For every Newton polytope $\Delta\subset\bR_{\geq0}^{k}$  and every  $v\in\bR_{\geq0}^{k}$ we define 
the {\em support function} $l(v,\Delta)=\min\{\scalar{v}{\alpha}:\alpha\in\Delta\}$. 
To prove the theorem it  is enough to show that the
support functions  $l(\cdot,\Delta(D_f))$ and $l(\cdot,\Delta_{T(f)})$ are equal. As these functions are continuous 
it suffices to show the equality on a dense subset of $\bR_{\geq0}^{d+1}$.

\medskip

\noindent Let $c=(c_1,\dots,c_{d+1})=(\underline{c},c_{d+1})\in\bR_{\geq0}^{d+1}$, where
$\underline{c}=(c_1,\dots,c_d)$.

\medskip

\noindent Perturbing $c$ a little we may assume that the hyperplane 
$\{\,\alpha\in\bR^{d+1}:\scalar{c}{\alpha}=l(c,\Delta(D_f)\,\}$ 
supports $\Delta(D_f)$ at exactly one point $\check{\alpha}=(\underline{\check{\alpha}},\check{\alpha}_{d+1})$. 
Since after a small change of $c$ the support point remains the same, we can assume,
perturbing  $c$ again if necessary, that all $c_i$ are positive rational numbers.

\medskip

\noindent We will show that 
\begin{equation}\label{Eq:3}
l(c,\Delta_{T(f)})=l(c,\Delta(D_f)).
\end{equation}

\noindent Multiplying $c$ by the common denominator of $c_1$, \dots, $c_{d+1}$ we may 
assume that all $c_i$ are integers bigger than or equal to $\deg f$. At this point of the proof 
we fixed $c$. Let $g(Y)$ be the Weierstrass polynomial given by~(\ref{Eq:2}).  
We claim that
$l(c,\Delta_{T(f)})=l\bigl((1,c_{d+1}),\Delta_{T(g)}\bigr)$ and 
$l(c,\Delta(D_f))=l\bigl((1,c_{d+1}),\Delta(D_g)\bigr)$.

\medskip
\noindent
First equality follows from Corollary~\ref{C:1} and the
identity $\scalar{c}{\alpha}=\scalar{(1,c_{d+1})}{\pi(\alpha)}$ for $\alpha\in\bR^{d+1}$.

\medskip\noindent 
Let  $D_f(\underline{X},V)=\sum_{\alpha}
d_{ \alpha}{\underline{X}}^{\underline{\alpha}}V^{\alpha_{d+1}}$, 
where $\alpha=(\underline \alpha, \alpha_{d+1})$. 
As the discriminant commutes with base change  we get by~(\ref{Eq:2})
$D_g(T,V)=\sum_{ \alpha}d_{\alpha}T^{\scalar{\underline{c}}{\underline{\alpha}}}V^{\alpha_{d+1}}$. 
Since the hyperplane 
$\{\,\alpha\in\bR^{d+1}:\scalar{c}{\alpha}=l(c,\Delta(D_f)\,\}$ 
supports $\Delta(D_f)$ at $\check{\alpha}$, the monomial 
$d_{\check{\alpha}}T^{\scalar{\underline{c}}{\underline{\check{\alpha}}}}V^{\check{\alpha}_{d+1}}$ satisfies the equality 
$\scalar{\underbar{c}}{\underline{\check{\alpha}}}+c_{d+1}\check{\alpha}_{d+1}=l(c,\Delta(D_f))$, 
while for all other monomials $d_{\alpha}T^{\scalar{\underline{c}}
{\underline{\alpha}}}V^{\alpha_{d+1}}$ with $d_{\alpha}\neq0$ appearing in the sum 
$\sum_{ \alpha}d_{\alpha}T^{\scalar{\underline{c}}{\underline{\alpha}}}V^{\alpha_{d+1}}$ 
we have  $\scalar{\underbar{c}}{\underline{\alpha}}+c_{d+1}\alpha_{d+1}>l(c,\Delta(D_f))$.
Hence 
$l\bigl((1,c_{d+1}),\Delta(D_g)\bigr)=
\scalar{\underbar{c}}{\underline{\check{\alpha}}}+c_{d+1}\check{\alpha}_{d+1}=
l(c,\Delta(D_f))$.

\medskip

\noindent 
 By Lemma~\ref{L:3} $\Delta_{T(g)}=\Delta(D_g)$ which together with the just proved 
claim gives~(\ref{Eq:3}). This completes the proof because $c$ is sufficiently general. 
\end{proof}

\medskip

\noindent From Theorem \ref{Th:2}, Corollary \ref{C:1} and Lemma \ref{L:3} we get $\pi(\Delta(D_f))=\pi(\Delta_{T(f)})=\Delta_{T(g)}=\Delta(D_g)$, which gives us

\begin{Corollary}\label{C:2}
Let  ${\pi}:\bR^{d+1}\to\bR^2$  be the linear mapping given by 
$(\underline{x},x_{d+1})\mapsto (\scalar{\underline{c}}{\underline{x}},x_{d+1})$. 
Then $\pi(\Delta(D_f))=\Delta(D_g)$, where $f$ and $g$ are quasi-ordinary Weierstrass polynomials given by the equations 
(\ref{Eq:1}) and  (\ref{Eq:2}) respectively.
\end{Corollary}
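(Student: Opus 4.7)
The plan is to chain together the three results established earlier in this section, reducing the statement to the sequence of equalities
\[
\pi(\Delta(D_f)) \;=\; \pi(\Delta_{T(f)}) \;=\; \Delta_{T(g)} \;=\; \Delta(D_g).
\]

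I would argue each equality in turn. First, by Theorem \ref{Th:2} applied to $f$ we have $\Delta(D_f) = \Delta_{T(f)}$, and applying the linear map $\pi$ to both sides yields the first equality for free. The second equality is precisely Corollary \ref{C:1}, which in turn rests on Lemma \ref{L:2} together with the Minkowski-additivity of $\pi$ on the elementary Newton polytopes entering the definition of $\Delta_T$. For the third equality I would apply Lemma \ref{L:3} to $g$.

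The one step that requires care is the invocation of Lemma \ref{L:3}, whose hypothesis is that $T$ does not divide the initial form of $g$ viewed as a power series in $(T,Y)$. As noted in the remark following equation (\ref{Eq:2}), this condition is automatic as soon as $c_i \geq n = \deg f$ for all $i$: each $a_i(T^{c_1},\dots,T^{c_d})$ then has order at least $n$ in $T$, so that $Y^n$ sits inside the initial form of $g$. Assuming this is the intended regime, the three equalities concatenate to give the corollary with no further work. If instead one wishes the statement to hold for arbitrary positive integers $c_i$, the natural workaround would be to pass to a sufficiently large positive integer multiple of $c$ (so that the rescaled vector satisfies $c_i \geq n$) and transfer the conclusion back by comparing support functions along the lines of the proof of Theorem \ref{Th:2}; this is the only genuine obstacle I anticipate.
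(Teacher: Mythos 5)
Your proof is exactly the paper's: the authors obtain the corollary by the same chain $\pi(\Delta(D_f))=\pi(\Delta_{T(f)})=\Delta_{T(g)}=\Delta(D_g)$, citing Theorem~\ref{Th:2}, Corollary~\ref{C:1} and Lemma~\ref{L:3} in that order. Your additional remark about needing $c_i\geq n$ to meet the hypothesis of Lemma~\ref{L:3} is a legitimate and correctly handled point of care that the paper leaves implicit in the remark following equation~(\ref{Eq:2}).
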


\section{Symmetry of the tree model}
\label{symmetry}
\noindent In this section we describe symmetries of the tree model associated with a quasi-ordinary Weierstrass polynomial $f(Y)$. 

\medskip

\noindent Let $U=\{\omega\in\bK: \omega^k=1\}$ be the multiplicative group of k-th roots of unity.
With every $d$-tuple $\underline{\epsilon}=(\epsilon_1,\dots,\epsilon_d)\in U^d$
we associate the $\bK$-algebra homomorphism 
$\phi_{\underline{\epsilon}}:\bK[[X_1^\frac{1}{k},\dots,X_d^\frac{1}{k}]]\to\bK[[X_1^\frac{1}{k},\dots,X_d^\frac{1}{k}]]$, such that $\phi_{\underline{\epsilon}}(X_i^{\frac{1}{k}})=\epsilon_i X_i^{\frac{1}{k}}$ for $i=1,\dots,d$.  
Since $\phi_{\underline{\epsilon}}(X_i)=\epsilon_i^k X_i=X_i$, the homomorphism $\phi_{\underline{\epsilon}}$ is the  identity on $\bK[[\underline{X}]]$. 
For every $\underline{\epsilon},\underline{\omega}\in U^d$ we have $\phi_{\underline{\epsilon}}\circ\phi_{\underline{\omega}}=\phi_{\underline{\epsilon}\cdot \underline{\omega}}$, where the product  $\underline{\epsilon}\cdot \underline{\omega}$ is componentwise.
Hence the {\em star operation} $\underline{\epsilon}*\psi(\underline{X}):=\phi_{\underline{\epsilon}}(\psi(\underline{X}))$ is an 
action of the group $U^d$ on $\bK[[X_1^\frac{1}{k},\dots,X_d^\frac{1}{k}]]$.

\medskip
\noindent If 
$\psi(\underline{X})=\sum_{\underline{\alpha}\in(1/k)\bN^d}c_{\underline{\alpha}}{\underline{X}}^{\underline{\alpha}}$
then
$\underline{\epsilon}*\psi(\underline{X})= \sum_{\underline{\alpha}\in(1/k)\bN^d}c_{\underline{\alpha}}
\underline{\epsilon}^{k\underline{\alpha}}{\underline{X}}^{\underline{\alpha}}$.

\medskip

\noindent We will show that the star operation
permutes the set $\Zer f$ and is transitive on $\Zer f$ providing $f(Y)$ is 
irreducible in $\bK[[\underline{X}]][Y]$. Moreover, it preserves the contact.  

\medskip

\noindent To be more precise, we have 

\begin{Property}\label{P:1} \ 

\begin{itemize}
\item[(i)] $\underline{\epsilon}*\Zer f=\Zer f$ for every $\underline{\epsilon}\in U^d$.
\item[(ii)] If $f(Y)$ is irreducible in $\bK[[\underline{X}]][Y]$ 
then $\Zer f=U^d*Y_i$ for every $Y_i\in\Zer f$. 
\item[(iii)] $O(Y_i,Y_j)=O(\underline{\epsilon}*Y_i,\underline{\epsilon}*Y_j)$ for every $\underline{\epsilon}\in U^d$ and  $i\neq j$. 
\end{itemize}
\end{Property}

\noindent \begin{proof}
Fix $\underline{\epsilon}\in U^d$.  The  homomorphism $\phi_{\underline{\epsilon}}$ naturally extends to the homomorphism 
$\Phi_{\underline{\epsilon}}:
\bK[[X_1^\frac{1}{k},\dots,X_d^\frac{1}{k}]][Y]\to\bK[[X_1^\frac{1}{k},\dots,X_d^\frac{1}{k}]][Y]$.
Acting by $\Phi_{\underline{\epsilon}}$ on $f(Y)=\prod_{i=1}^n[Y-Y_i]$ we get 
$f(Y)=\Phi_{\underline{\epsilon}}(f(Y))=\prod_{i=1}^n[Y-\phi_{\underline{\epsilon}}(Y_i)]$ which proves \emph{(i)}.

\medskip

\noindent Fix $Y_i\in\Zer f$ and let $f_1(Y)=\prod_{Y(\underline{X})\in U^d*Y_i}[Y-Y(\underline{X})]$. 
For every $\underline{\epsilon}\in U^d$ we  have $\Phi_{\underline{\epsilon}}(f_1(Y))=f_1(Y)$. 
Since the action of $U^d$ on $f_1(Y)$ is trivial the polynomial $f_1(Y)$ has  
coefficients in the ring $\bK[[\underline{X}]]$. 
By~\emph{(i)} all roots of $f_1(Y)$ are the roots of $f(Y)$. Assuming that $f(Y)$ is irreducible in $\bK[[\underline{X}]][Y]$, we get $f_1(Y)=f(Y)$ which proves \emph{(ii)}.

\medskip

\noindent Statement \emph{(iii)} follows directly from the definition of the {\em star action}. 
\end{proof}

\medskip
\noindent For every $\underline{\epsilon}\in U^d$ the mapping $\Zer f\ni Y_i\to\underline{\epsilon}*Y_i\in\Zer f$ preserves contacts. 
Let $B=\{Y_{i_1},\dots,Y_{i_s}\}$ be a bar of $T(f)$. Then 
$\underline{\epsilon}*B=\{\underline{\epsilon}*Y_{i_1},\dots,\underline{\epsilon}*Y_{i_s}\}$ is also a bar of  $T(f)$ of the same height. Thus $U^d\times T(f)\ni (\underline{\epsilon},B)\to \underline{\epsilon}*B\in T(f)$ is an action of the group $U^d$ on $T(f)$ which 
for each fixed $\underline{\epsilon}$ yields a symmetry of $T(f)$  preserving heights.  

\medskip
\noindent 
Every bar $\underline{\epsilon}*B$ will be called \emph{conjugate} to $B$.
Further in this section  we count the number of conjugates  of $B\in T(f)$. 
To this aim we employ the theory of {\em dual groups}. 

\medskip

\noindent Let $C$ be a cyclic group of order $k$ and let $G$ be a finite commutative group such that $kg=0$ for every $g\in G$. Recall that the {\em dual} of $G$, denoted $G^*$, is the group 
of homomorphisms from $G$ to $C$. 
 The main theorem of dual groups states that $G^*$ is isomorphic to $G$. 
\medskip

\noindent Let $A$, $A'$ be commutative groups. 
The mapping $A\times A'\to C$, $\;(x,x')\to \pairing{x}{x'}$ is called a {\em pairing} if for every 
$x'\in A'$ the mapping $\phi_{x'}=\pairing{\cdot}{x'}$ is a homomorphism of $A$ to $C$ and for every
$x\in A$ the mapping $\psi_x=\pairing{x}{\cdot}$ is a homomorphism of $A'$ to $C$.

\medskip
\noindent  For every $a\in A$ and $a'\in A'$ we introduce the orthogonal relation $a\perp a'$ if and only if 
$\pairing{a}{a'}$ is the identity element of $C$. For every set $B\subset A$ we denote by $B^{\perp}$ 
the set $\{x'\in A': \; b\perp x'\; \mbox{ for all }\; b\in B\}$. We make a similar definition of $(B')^{\perp}$ for $B'\subset A'$.

\medskip
\begin{Theorem}[\cite{Lang}, Theorem 9.2]
\label{th:Lang}
Let $A\times A' \to C$ be a pairing of two abelian groups into a finite cyclic group $C$. Assume that $A'$ is finite. Then $A'/A^{\perp}$ is isomorphic to the dual group of $A/(A')^{\perp}$.
\end{Theorem}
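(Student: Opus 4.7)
The plan is to factor the pairing through the quotients $A/(A')^{\perp}$ and $A'/A^{\perp}$ to obtain an induced non\-degenerate pairing, then show that both quotients are finite abelian groups with exponent dividing $k=|C|$, and finally conclude by a cardinality comparison using the main theorem of dual groups quoted just before the statement.

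First I would observe that the pairing produces two canonical group homomorphisms $A\to\mathrm{Hom}(A',C)$ (sending $a\mapsto \pairing{a}{\cdot}$) and $A'\to\mathrm{Hom}(A,C)$ (sending $a'\mapsto \pairing{\cdot}{a'}$), whose kernels are by definition $(A')^{\perp}$ and $A^{\perp}$ respectively. Writing $\bar A:=A/(A')^{\perp}$ and $\bar{A'}:=A'/A^{\perp}$, these descend to \emph{injective} homomorphisms
$$
\iota'\colon \bar A \longrightarrow \mathrm{Hom}(\bar{A'},C),
\qquad
\iota\colon \bar{A'} \longrightarrow \mathrm{Hom}(\bar A,C).
$$
Since every element in the image of a homomorphism to $C$ is killed by $k$, we get $k\bar A=0$ and $k\bar{A'}=0$, so both $\bar A$ and $\bar{A'}$ have exponent dividing $k$.

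Now I would use the finiteness assumption on $A'$: the quotient $\bar{A'}$ is then finite, hence $\mathrm{Hom}(\bar{A'},C)$ is finite, and the injection $\iota'$ forces $\bar A$ to be finite as well. At this point the main theorem of dual groups, cited in the paper, applies to both $\bar A$ and $\bar{A'}$ (finite abelian, exponent dividing $k$) and yields $|\mathrm{Hom}(\bar A,C)|=|\bar A|$ and $|\mathrm{Hom}(\bar{A'},C)|=|\bar{A'}|$. Combining this with the two injections produces the chain $|\bar{A'}|\le|\bar A|\le|\bar{A'}|$, so equality holds throughout and both $\iota,\iota'$ are isomorphisms. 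In particular $\iota$ realises the desired isomorphism $A'/A^{\perp}\cong \bigl(A/(A')^{\perp}\bigr)^{*}$.

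The main obstacle I anticipate is precisely the reduction to finiteness of $\bar A$: without the hypothesis that $A'$ is finite, the set $\mathrm{Hom}(\bar{A'},C)$ need not be finite and the cardinality comparison collapses, so the theorem would fail as stated. Everything else is essentially bookkeeping on top of the structure theorem for finite abelian groups packaged inside the main theorem of dual groups invoked just above.
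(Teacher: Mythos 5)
The paper offers no proof of this statement --- it is quoted directly from Lang's \emph{Algebra} (Theorem 9.2 of Chapter I) --- so there is no in-paper argument to compare against; judged on its own, your proof is correct and complete, and it is essentially the standard argument from Lang. The two induced injections $\bar A\hookrightarrow\mathrm{Hom}(\bar{A'},C)$ and $\bar{A'}\hookrightarrow\mathrm{Hom}(\bar A,C)$, the observation that both quotients are annihilated by $|C|$, the deduction of finiteness of $\bar A$ from finiteness of $A'$, and the cardinality squeeze via $|G^*|=|G|$ are all sound and in the right order, so $\iota$ is indeed the desired isomorphism $A'/A^{\perp}\cong\bigl(A/(A')^{\perp}\bigr)^{*}$.
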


\begin{Corollary}
\label{orthogonal}
Let $A\times A' \to C$ be a pairing of two abelian groups into a finite cyclic group $C$. 
Assume that $A'$ is finite. 
If $M$, $N$  are subgroups of $A$ such that $A^{'\perp}\subset N\subset M$ then 
$[M:N]=[N^\perp:M^\perp]$.
\end{Corollary}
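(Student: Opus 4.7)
The plan is to apply Theorem~\ref{th:Lang} twice, once to $M$ and once to $N$, each time restricting the given pairing $A\times A'\to C$.

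First I would restrict the pairing to $M\times A'\to C$. In this restricted pairing the analog of $A^{\perp}$ becomes exactly $M^{\perp}$ (same definition, just restricted to $M$), while the analog of $(A')^{\perp}$ becomes $M\cap (A')^{\perp}$. Here the hypothesis $(A')^{\perp}\subset N\subset M$ is crucial: it forces $M\cap(A')^{\perp}=(A')^{\perp}$, so the "internal" orthogonal in $M$ coincides with the original $(A')^{\perp}$ in $A$. Applying Theorem~\ref{th:Lang}, one obtains that $A'/M^{\perp}$ is isomorphic to the dual of $M/(A')^{\perp}$; in particular, since $A'$ is finite and a finite abelian group has the same order as its dual, we get
\[
|M/(A')^{\perp}|=|A'/M^{\perp}|.
\]

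Repeating the same argument with $M$ replaced by $N$ (the inclusion $(A')^{\perp}\subset N$ again guarantees $N\cap(A')^{\perp}=(A')^{\perp}$), one obtains
\[
|N/(A')^{\perp}|=|A'/N^{\perp}|.
\]

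Finally, since $N\subset M$ implies $M^{\perp}\subset N^{\perp}$, I would combine the two equalities by dividing:
\[
[M:N]=\frac{|M/(A')^{\perp}|}{|N/(A')^{\perp}|}=\frac{|A'/M^{\perp}|}{|A'/N^{\perp}|}=[N^{\perp}:M^{\perp}],
\]
which is the claim. The only subtle point, and the one that merits being stated explicitly in the write-up, is the identification of the internal orthogonal complements in $M$ and $N$ with $(A')^{\perp}$ in $A$; everything else is a formal application of Theorem~\ref{th:Lang} together with the equality of orders for a finite abelian group and its dual.
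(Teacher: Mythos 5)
Your proof is correct, and it takes a genuinely different route from the one in the paper. The paper first establishes the biduality statement $(N^{\perp})^{\perp}=N$ for any subgroup $N$ containing $(A')^{\perp}$ --- via a contradiction argument involving the group $N_1$ generated by $N$ and an element $a\notin N$ --- and then applies Theorem~\ref{th:Lang} a single time, to the pairing $M\times N^{\perp}\to C$, obtaining the explicit duality $N^{\perp}/M^{\perp}\cong (M/N)^{*}$ from which the index equality follows. You instead apply Theorem~\ref{th:Lang} twice, to the restricted pairings $M\times A'\to C$ and $N\times A'\to C$, and divide the resulting order equalities; the hypothesis $(A')^{\perp}\subset N\subset M$ enters exactly where you say it does, to identify the internal orthogonals $M\cap(A')^{\perp}$ and $N\cap(A')^{\perp}$ with $(A')^{\perp}$. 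Your approach avoids the biduality lemma entirely and is shorter; what it gives up is the structural by-product that $N^{\perp}/M^{\perp}$ is literally the dual of $M/N$ and that $(N^{\perp})^{\perp}=N$, neither of which is needed elsewhere in the paper. One small point worth making explicit in your write-up: to invoke \emph{a finite abelian group has the same order as its dual} you need $M/(A')^{\perp}$ and $N/(A')^{\perp}$ to be finite and annihilated by the order of $C$; finiteness follows because the pairing embeds $M/(A')^{\perp}$ into the dual of the finite group $A'$. The paper's own proof leaves the analogous finiteness of $M/N$ equally implicit, so this is a matter of polish rather than a gap.
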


\noindent \begin{proof}
First, we will show that $(N^{\perp})^{\perp}=N$.

\noindent Let $a\in A\setminus N$. Then there exists $a'\in N^{\perp}$ such that $a\not\perp a'$. 
Indeed, if this is not the case then $N^{\perp}=N_1^{\perp}$, 
where $N_1$ is the group generated by $N\cup\{a\}$. 
By Theorem~\ref{th:Lang} the group $A'/N^{\perp}=A'/N_1^{\perp}$ would be dual 
of $N/(A')^{\perp}$ and of $N_1/(A')^{\perp}$ which is impossible because these groups have different number of elements since the coset of $a$ belongs to $N_1/(A')^{\perp}$ but not in $N/(A')^{\perp}$.  This shows that $a\notin (N^{\perp})^{\perp}$. Since $a$ is an arbitrary element of $A\setminus N$, we have $(N^{\perp})^{\perp}\subset N$. 

\noindent Let $a\in N$. Then for every $a'\in N^{\perp}$ we have $a\perp a'$. 
Consequently $a\in(N^{\perp})^{\perp}$ which gives $N\subset (N^{\perp})^{\perp}$. 
The first part of the proof is finished. 

\noindent It follows from Theorem~\ref{th:Lang} applied to  the pairing $M\times N^\perp\to C$ that 
$N^\perp/M^\perp$ is the dual of $M/(N^{\perp})^{\perp}=M/N$. Since a finite abelian group 
is isomorphic to its dual, we get $[M:N]=[N^\perp:M^\perp]$.
\end{proof}

\medskip

\noindent Let $B'$ be a postbar of $B\in T(f)$. Since all $Y_i,Y_j\in B'$  belong to the same equivalence class mod $h(B)^+$,  they have the same  term of exponent $h(B)$. Let $c$  be the coefficient of such a term. Following \cite{K-P} we write 
$B\perp_c B'$ and say that $B'$ {\em is supported at} $c$ on $B$. It is obvious that different postbars of $B$ are supported at different points.

\medskip

\begin{Definition}
\label{def}
 Let $B_0\perp_{c_0} B_1 \perp_{c_1} \cdots \perp_{c_{r-2}}B_{r-1}\perp_{c_{r-1}} B_r=  B$ be a sequence of bars of $T(f)$, where $B_0$ is the bar of the lowest height in $T(f)$. Let $H(B)=\{h(B_i)\;:\;c_i\neq 0,\;0\leq i\leq r-1\}=\{h_1,\dots,h_s\}$.
Then we call the lattice $N(B)=\bZ^d+\bZ h_1+\cdots+\bZ h_s$ the characteristic lattice of $B$. 
\end{Definition}

\noindent Note that if $Y(\underline{X})$ is any element of $B$ then $H(B)$ 
consist  of  such heights $h(B_i)$, $0\leq i \leq r-1$,  that
$\underline{X}^{h(B_i)}$ appears in $Y(\underline{X})$ with nonzero coefficient. 

\medskip 
\noindent Consider the pairing 
$(1/k)\bZ^d\times U^d\ni(\lambda,\underline{\epsilon})\to
\underline{\epsilon}^{k\lambda}\in U$. Directly from the definition it follows that for $\underline{\epsilon}\in U^d$ and $\lambda \in  (1/k)\bN^d$ holds $\underline{\epsilon}*\underline{X}^{\lambda}=\underline{X}^{\lambda}$ if and only if $ \lambda \perp \underline{\epsilon}$. It is easy to check that $(U^d)^{\perp}=\bZ^d$.

\begin{Theorem}\label{Th:3}
Every $B\in T(f)$
has $[N(B):\bZ^d]$ conjugates. \\ Let $B\perp_c B'$.
\begin{enumerate}
\item If $c\neq 0$ then there are $n(B)=[N(B)+\bZ h(B):N(B)]$ postbars of 
$B$ conjugate with $B'$. 

\item If $c=0$ 
then there there are no  postbars of $B$ conjugate with $B'$, expect $B'$ itself. 
\end{enumerate}
\end{Theorem}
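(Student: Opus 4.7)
The theorem amounts to two applications of the orbit--stabilizer principle for the $U^d$-action on $T(f)$, and the main preliminary is to identify the stabilizer of a bar. Fix a realizing sequence $B_0\perp_{c_0}\cdots\perp_{c_{r-1}}B_r=B$ and choose $Y\in B$. For any other $Y'\in B$ one has $O(Y,Y')>h(B_{r-1})$, so $Y-Y'=\underline{X}^{O(Y,Y')}\cdot(\mbox{unit})$ has all exponents strictly greater than $h(B_{r-1})$ in the partial order; consequently $Y$ and $Y'$ share identical coefficients at every exponent $\alpha\not>h(B_{r-1})$, and one can split $Y=\tau+R$, where the ``stem'' $\tau$ depends only on $B$ and $R$ has support in $\{\alpha:\alpha>h(B_{r-1})\}$. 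Since $\underline{\epsilon}*R-R$ likewise has support in $\{\alpha:\alpha>h(B_{r-1})\}$, the condition $\underline{\epsilon}*Y\in B$ reduces to $\underline{\epsilon}*\tau=\tau$, i.e.\ $\underline{\epsilon}\perp\alpha$ for every $\alpha\in\supp\tau$. Invoking the structural property of Puiseux roots of a quasi-ordinary polynomial (their supports lie in the lattice generated by $\bZ^d$ and the characteristic exponents), together with the remark following Definition~\ref{def}, one obtains $\supp\tau\subset N(B)$ with $H(B)\subset\supp\tau$. Since $\underline{\epsilon}\perp\bZ^d$ automatically, the condition collapses to $\underline{\epsilon}\in N(B)^\perp$, so $\mathrm{Stab}(B)=N(B)^\perp$.

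With the stabilizer identified, the first assertion is immediate: the number of conjugates of $B$ is $[U^d:N(B)^\perp]$ by orbit--stabilizer, and Corollary~\ref{orthogonal} applied with $M=N(B)$, $N=\bZ^d$ (the hypothesis $(U^d)^\perp=\bZ^d\subset \bZ^d\subset N(B)$ holds) gives $[N(B):\bZ^d]=[U^d:N(B)^\perp]$.

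For the postbar statement I first verify that $\underline{\epsilon}*B'$ is a postbar of $B$ if and only if $\underline{\epsilon}\in\mathrm{Stab}(B)$. One direction is clear: if $\underline{\epsilon}*B=B$ then $\underline{\epsilon}*B'\subset\underline{\epsilon}*B=B$ is a bar at height $h(B')$. Conversely, if $\underline{\epsilon}*B'\subset B$, Property~\ref{P:1}(iii) yields $O(\underline{\epsilon}*Y,\underline{\epsilon}*Y'')=O(Y,Y'')>h(B_{r-1})$ for any $Y\in B'$, $Y''\in B$, so $\underline{\epsilon}*Y''$ lies in the same equivalence class modulo $h(B_{r-1})^+$ as $\underline{\epsilon}*Y\in B$; hence $\underline{\epsilon}*B\subset B$, and $=B$ by cardinality. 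The number of postbars conjugate to $B'$ is therefore the orbit size of $B'$ under $N(B)^\perp$, namely $[N(B)^\perp:N(B)^\perp\cap N(B')^\perp]=[N(B)^\perp:N(B')^\perp]$ (using $N(B)\subset N(B')$). When $c\neq 0$, $N(B')=N(B)+\bZ h(B)$ and Corollary~\ref{orthogonal} with $M=N(B')$, $N=N(B)$ gives $[N(B)^\perp:N(B')^\perp]=[N(B'):N(B)]=n(B)$; when $c=0$, one has $H(B')=H(B)$, so $N(B')=N(B)$ and the orbit collapses to $\{B'\}$.

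\textbf{Main obstacle.} The technical crux is the support inclusion $\supp\tau\subset N(B)$ used in the stabilizer identification; it encodes the structural fact that Puiseux roots of a quasi-ordinary polynomial are supported on the characteristic lattice. Once this is in hand, the remaining work is a clean synthesis of orbit--stabilizer on the finite action of $U^d$ with the duality of Corollary~\ref{orthogonal}, and the two applications of that corollary need only the verifications $\bZ^d\subset N(B)$ and $N(B)\subset N(B')$ to satisfy its hypothesis.
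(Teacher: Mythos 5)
Your proof is correct and takes essentially the same route as the paper's: identify the stabilizer of a bar under the $U^d$-action as $N(B)^\perp$, then apply the orbit--stabilizer theorem together with Corollary~\ref{orthogonal} twice (once for the conjugates of $B$, once for the orbit of $B'$ under $N(B)^\perp$). You are in fact more explicit than the paper about the one substantive input --- the inclusion $\supp\tau\subset N(B)$ coming from the structure of supports of quasi-ordinary branches --- which the paper's ``if and only if'' for the stabilizer condition uses silently.
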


\noindent \begin{proof}
Given $Y_i\in B$ and $\underline{\epsilon}\in U^d$ the contact between $Y_i$ and $\underline{\epsilon}*Y_i$ 
is bigger than or equal to $h(B)$ if and only if $h\perp \underline{\epsilon}$
for every $h\in H(B)$,
since otherwise the monomial $\underline{X}^h$ would appear in the difference $\underline{\epsilon}*Y_i-Y_i$ with nonzero coefficient. 
It follows that $\underline{\epsilon}*B=B$  if and only if $\underline{\epsilon} \in N(B)^{\perp}$.
Thus the stabilizer of $B$ under the action of $U^d$ is the group $N(B)^{\perp}$. 
By the orbit stabilizer theorem and Corollary~\ref{orthogonal} the set $U^d*B$ has
$[U^d:N(B)^{\perp}]=[N(B):\bZ^d]$ elements which proves the first part of the theorem.

\medskip

\noindent Let $B'$ be a postbar of $B$.
Then $\underline{\epsilon}*B'$ is a postbar of $B$ if and only if $\underline{\epsilon}*B=B$. 
Thus the set of postbars of $B$ which are conjugate to $B'$ is equal to $N(B)^{\perp}*B'$. 
By the just proven part of the theorem  $N(B')^{\perp}$ is the stabilizer of $B'$ under the action 
of $U^d$. 
By the orbit stabilizer theorem and Corollary~\ref{orthogonal}, the number of elements of 
$N(B)^{\perp}*B'$ equals $[N(B)^{\perp}:N(B')^{\perp}]=[N(B'):N(B)]$.

\medskip
\noindent Assume that  $c\neq 0$.
Then $[N(B'):N(B)]=[N(B)+\bZ h(B):N(B)]$. 

\medskip

\noindent Now, suppose that $c=0$. Since $N(B')=N(B)$, we get  $[N(B'):N(B)]=1$,  
hence the set of postbars of $B$ conjugate to $B'$ has one element.
\end{proof}

\begin{Corollary}\label{C:3}
If $B\in T(f)$ has $n(B)$ postbars then all of them are conjugate and they are supported at nonzero numbers.
\end{Corollary}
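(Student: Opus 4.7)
The plan is to partition the postbars of $B$ into conjugacy classes under the star action and to use Theorem~\ref{Th:3} to constrain the sizes of these classes; a short counting argument then finishes the proof.

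Let $B_1,\dots,B_k$ be the postbars of $B$, with $B\perp_{c_i}B_i$. Since different postbars of $B$ are supported at different points, at most one of the coefficients $c_i$ vanishes. By Theorem~\ref{Th:3}, each postbar supported at a nonzero value belongs to a conjugacy class containing exactly $n(B)$ postbars of $B$, while a postbar supported at $0$ is alone in its class. Writing $m_0\in\{0,1\}$ for the number of postbars at $0$ and $\ell$ for the number of classes at nonzero support, this gives
\[
k \;=\; m_0+\ell\, n(B).
\]
Under the hypothesis $k=n(B)$ the identity rearranges to $n(B)(1-\ell)=m_0\le 1$. To discard the trivial solution $\ell=0$, $m_0=n(B)=1$, I will use the fact that every $B\in T(f)$ has at least two postbars: indeed, $h(B)$ is realised as the contact $O(Y_i,Y_j)$ of some pair of elements of $B$, and any such pair sits in distinct classes modulo $h(B)^+$. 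Thus $k\ge 2$ and hence $n(B)\ge 2$, which forces $\ell=1$ and $m_0=0$. This simultaneously yields both claims of the corollary: the $n(B)$ postbars of $B$ form a single conjugacy class, and each of them is supported at a nonzero number.

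The one step that requires care is the lower bound $k\ge 2$ on the number of postbars; without it one could not exclude the degenerate configuration $k=n(B)=1$ with a unique postbar supported at $0$. Once this mild subtlety is settled, the remainder of the proof is a direct translation of Theorem~\ref{Th:3} into an orbit count.
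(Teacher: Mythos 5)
Your proof is correct and is precisely the counting argument from Theorem~\ref{Th:3} that the paper leaves implicit (it states the result as a corollary without proof): the conjugacy classes of postbars have size $n(B)$ or $1$ according to whether the support is nonzero or zero, and the hypothesis forces a single class of nonzero support. Your care in establishing $t(B)\ge 2$ for a bar of finite height, which rules out the degenerate case $n(B)=1$, is a worthwhile addition rather than a deviation.
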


\section{The tree model of an irreducible polynomial}
\label{tree model irreducible}
Let $f(Y)\in\bK[[\underline{X}]][Y]$ be an irreducible quasi-ordinary Weierstrass polynomial. 
By Property~\ref{P:1} the action of $U^d$ on $\Zer f$ is transitive. This implies 
that  for fixed $Y_i$, the set of contacts $\{O(Y_j,Y_i):j\neq i\}$ does
not depend on the choice of $Y_i\in \Zer f$. 
If $\{O(Y_j,Y_i):j\neq i\}=\{h_1,\dots,h_g\}$, where $h_1<h_2<\dots<h_g$ then  $h_1,h_2,\dots,h_g$ is called the \emph{sequence of characteristic exponents of $f(Y)$}. The next lemma is  in \cite{Lipman}  (Remarks 5.8, page 469) but we give the proof for convenience of the reader.

\begin{Lemma}\label{lemma}
A finite sequence $h_1,h_2,\dots,h_g$ of elements from $\bQ_{\geq 0}^d$ is a sequence 
of characteristic exponents of an irreducible quasi-ordinary Weierstrass polynomial  
$f(Y)\in\bK[[X_1,\dots,X_d]][Y]$  if and only if
\begin{enumerate}
\item [(C1)]$h_1< h_2< \cdots < h_g$ and 
\item [(C2)] $h_i\not\in N_{i-1}:=\bZ^d+\bZ h_1+\cdots+\bZ h_{i-1}$ for $i=1,\dots, g$,
\end{enumerate}

\noindent  where $N_0=\bZ^d$.

\end{Lemma}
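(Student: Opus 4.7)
The plan is to use the $U^d$-action machinery from Section~\ref{symmetry}: Property~\ref{P:1}(ii), Theorem~\ref{Th:3} and the lattice duality in Corollary~\ref{orthogonal}. Condition (C1) is the definition of characteristic exponents, so the only content is (C2), together with the construction in the sufficiency direction.

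\emph{Necessity.} Assume $f$ is irreducible with characteristic exponents $h_1<\dots<h_g$ and fix $Y_1\in\Zer f$. By Property~\ref{P:1}(ii), $\Zer f=U^d*Y_1$. The key preliminary step I would establish is a support lemma: every exponent $\alpha$ appearing in $Y_1$ with $\alpha<h_i$ lies in $N_{i-1}$. Indeed, if not, then by character duality applied to the pairing $(1/k)\bZ^d\times U^d\to U$ there is $\underline{\omega}_0\in N_{i-1}^{\perp}$ with $\underline{\omega}_0^{k\alpha}\neq 1$, and then the conjugate $\underline{\omega}_0*Y_1$ would have contact with $Y_1$ equal to some $\beta\leq\alpha<h_i$ lying outside $\{h_1,\dots,h_{i-1}\}$, contradicting the assumption that the contacts of $Y_1$ with the other roots equal $\{h_1,\dots,h_g\}$. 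With this in hand, $O(Y_1,\underline{\omega}*Y_1)\geq h_i$ iff $\underline{\omega}\in N_{i-1}^{\perp}$. Orbit-stabilizer together with Corollary~\ref{orthogonal} then gives
\[
 \#\{Y_j\in\Zer f : O(Y_1,Y_j)\geq h_i\}=[N_{i-1}^{\perp}:\mathrm{stab}(Y_1)],
\]
with the analogous formula using $N_i^{\perp}$ in the strict inequality; their ratio is $[N_{i-1}^{\perp}:N_i^{\perp}]=[N_i:N_{i-1}]$. Since $h_i$ is actually attained as a contact, this ratio is $>1$, so $N_i\supsetneq N_{i-1}$ and (C2) follows.

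\emph{Sufficiency.} Given (C1) and (C2), let $k$ be a common denominator of $h_1,\dots,h_g$, set $Y_1:=\underline{X}^{h_1}+\cdots+\underline{X}^{h_g}$, and define $f(Y):=\prod_{Y_j\in U^d*Y_1}(Y-Y_j)$. Invariance of the product under $U^d$ forces $f\in\bK[[\underline{X}]][Y]$; the roots form a single $U^d$-orbit, so $f$ is irreducible in $\bK[[\underline{X}]][Y]$; and differences of distinct roots are monomials times units, so the discriminant is a monomial in $\underline{X}$ times a unit, making $f$ a quasi-ordinary Weierstrass polynomial. Running the counting argument of the necessity direction in reverse identifies the contact set $\{O(Y_1,Y_j):j\neq 1\}$ with $\{h_i:N_{i-1}\neq N_i\}$, which by (C2) is exactly $\{h_1,\dots,h_g\}$.

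The main obstacle is the support lemma in the necessity direction: one has to control the ``extra'' support of $Y_1$ below the next characteristic exponent and show it cannot enlarge $N_{i-1}$. Once that qualitative fact is secured, everything else is a mechanical index computation via Corollary~\ref{orthogonal}.
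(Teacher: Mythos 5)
Your argument is correct and rests on the same machinery as the paper's (the $U^d$-action of Property~\ref{P:1} and the duality of Corollary~\ref{orthogonal}); the sufficiency direction is essentially identical to the paper's. The necessity direction, however, is organized quite differently. The paper's proof is a direct two-step observation: pick $\underline{\epsilon}_i\in U^d$ realizing the contact $h_i=O(\underline{\epsilon}_i*Y_1,Y_1)$; since each $\underline{X}^{h_j}$ with $j<i$ occurs in $Y_1$ and would otherwise survive in the difference, $\underline{\epsilon}_i$ must fix these monomials, so $\underline{\epsilon}_i\in N_{i-1}^{\perp}$, while $\underline{\epsilon}_i^{kh_i}\neq 1$; hence $h_i\notin N_{i-1}$ immediately. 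In particular the paper never needs your ``support lemma'' controlling \emph{all} exponents of $Y_1$ below $h_i$, nor the orbit-stabilizer count and the index identity $[N_{i-1}^{\perp}:N_i^{\perp}]=[N_i:N_{i-1}]$ -- that counting is exactly what the paper defers to Theorems~\ref{Th:3} and~\ref{Th:4}. Your detour is sound (the support lemma does hold, by the duality $(N_{i-1}^{\perp})^{\perp}=N_{i-1}$ combined with the fact that every finite contact is a characteristic exponent), and it buys a little more: it computes the cardinality of each contact class $\{Y_j: O(Y_1,Y_j)\geq h_i\}$, i.e.\ part of Theorem~\ref{Th:4}. But for the lemma itself it is heavier than necessary, and you should be aware that the one point you flagged as the ``main obstacle'' is precisely the point the paper's argument sidesteps entirely.
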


\noindent \begin{proof}
Let $f(Y)$ be an irreducible quasi-ordinary Weierstrass polynomial.
 Without loss of generality we may assume that all roots of $f(Y)$ 
belong to $\bK[[X_1^\frac{1}{k},\dots,X_d^\frac{1}{k}]]$. Let 
$Y_1$ be a fixed root of $f(Y)$  
and let $h_1,h_2,\dots,h_g$ be the sequence of its characteristic exponents. 
All roots of $f(Y)$ are conjugate by the action of $U^d$. Hence, by the definition 
of a sequence of characteristic exponents, for every 
$i\in \{1,\dots,g\}$ there exists $\underline\epsilon_i\in U^d$ such that
$h_i=O(\underline\epsilon_i*Y_1,Y_1)$.
This shows that all monomials $\underline X^{h_i}$ appear in $Y_1$ with non-zero coefficients. 
Moreover $\underline\epsilon_i*\underline X^{h_j}=\underline  X^{h_j}$ for $1\leq j<i$ and $\underline\epsilon_i*\underline X^{h_i}\neq \underline X^{h_i}$. 
We get $\underline\epsilon_i\in (N_{i-1})^\perp$ and
$\underline \epsilon_i^{kh_i}\neq 1$, hence $h_i\notin  N_{i-1}$ for $i=1,\dots, g$.

\medskip 

\noindent 
Now, assume that a sequence $h_1,h_2,\dots,h_g$ satisfies conditions $(C1)$ and $(C2)$.
Let $Y_1:=\underline X^{h_1}+\cdots+\underline X^{h_g}$. 
Clearly $Y_1\in\bK[[X_1^\frac{1}{k},\dots,X_d^\frac{1}{k}]]$ for some $k>0$.
Let $\{Y_1,\dots, Y_n\}$ be the set of conjugates of $Y_1$ by the action of $U^d$, 
where $U=\{\omega\in\bK: \omega^k=1\}$. 
Consider a polynomial $f(X)=\prod_{i=1}^n(Y- Y_i)$. As in the proof of Property~\ref{P:1} we show 
that $f(Y)$ is a polynomial with coefficients in the ring $\bK[[\underline X]]$ and that is irreducible 
over this ring. 

\medskip

\noindent Condition (C1) implies that the difference of any two roots of $f(Y)$ 
has a form $w(\underline X)\underline{X}^{h_l}$, where $w(0)\neq 0$ and $1\leq l\leq g$. 
Thus the discriminant of $f(Y)$, being the product of differences of the roots, equals 
$X_1^{\alpha_1}\cdots X_d^{\alpha_d}u(\underline{X})$, where $\alpha_i\in\bN$ and $u(\underline{X})$ is a unity in  $\bK[[\underline{X}]]$. This shows that $f(Y)$ is 
quasi-ordinary.

\medskip

\noindent By Condition (C2) we get $N_0\subsetneq N_1 \subsetneq \cdots \subsetneq N_g$ and consequently $U^d=N_0^{\perp}\supsetneq N_1^{\perp} \supsetneq \cdots \supsetneq N_g^{\perp}$. 
Take $\underline\epsilon\in U^d$. 
If  $\underline\epsilon\in N_{i-1}^{\perp}\backslash N_{i}^{\perp}$ 
then $O(\underline\epsilon*Y_1,Y_1)=h_i$ and if
$\underline\epsilon\in N_g^{\perp}$ then $\underline\epsilon*Y_1=Y_1$.  
Thus $h_1,h_2,\dots,h_g$  is the sequence of characteristic exponents of $f(Y)$.
\end{proof}

\medskip
\noindent Now we show that the tree model of an irreducible quasi-ordinary Weierstrass polynomial $f(Y)$ depends only on its sequence of characteristic exponents.

\begin{Theorem}\label{Th:4}
Let $f(Y)\in\bK[[\underline{X}]][Y]$ be an irreducible quasi-ordinary Weierstrass polynomial and let 
$h_1,h_2,\dots,h_g$ be the sequence of its characteristic exponents. 
Let $N_0=\bZ^d$ and $N_i=\bZ^d+\bZ h_1+\cdots+\bZ h_i$ for $i=1,\dots,g$.
Then the tree model $T(f)$ is characterized by two properties:
\begin{itemize}
\item[(i)] the set of the heights of bars of  $T(f)$ is $\{h_1,\dots,h_g,h_{g+1}\}$, 
               where $h_{g+1}=\infty$,
\item[(ii)] every bar of height $h_i$ has $[N_i:N_{i-1}]$ postbars 
               and all of them have the height $h_{i+1}$  for $i=1,\dots,g$. 
\end{itemize}
\end{Theorem}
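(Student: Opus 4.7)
The plan is to fix a root $Y_1\in\Zer f$, trace the unique chain of bars of $T(f)$ from $B_0=\Zer f$ down to the leaf $\{Y_1\}$, and then transport conclusions to the whole tree using the transitive action of $U^d$ on $\Zer f$ afforded by Property~\ref{P:1}(ii).

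By transitivity, the set $\{O(Y_1,Y):Y\in\Zer f\setminus\{Y_1\}\}$ equals $\{h_1,\dots,h_g\}$. At each level the bar containing $Y_1$ is its equivalence class modulo $h^+$, namely $\{Y:O(Y,Y_1)>h\}$, where $h$ is the parent's height. Starting from $h(B_0)=h_1$, I would show inductively that the postbar of height $h_i$ containing $Y_1$ has its own postbar of height $h_{i+1}$: the strong triangular inequality applied with $Y_1$ forces $O(Y,Y')\geq h_{i+1}$ for any two roots $Y,Y'$ in that postbar, while equality is realised by any $Y'$ with $O(Y_1,Y')=h_{i+1}$. So the path from $B_0$ to $\{Y_1\}$ visits bars of heights $h_1,h_2,\dots,h_g,\infty$ in order. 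Since every bar of $T(f)$ lies on the path to some leaf and transitivity gives every leaf the same sequence, the set of heights of $T(f)$ equals $\{h_1,\dots,h_g,h_{g+1}\}$, proving (i).

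For (ii), let $B$ be the bar of height $h_i$ on the path from $B_0$ to $\{Y_1\}$, and consider the associated chain $B_0\perp_{c_0}B_1\perp_{c_1}\cdots\perp_{c_{i-2}}B$. The coefficient $c_j$ is the coefficient of $\underline{X}^{h_{j+1}}$ in $Y_1$. Writing $Y_1=\sum c_\lambda \underline{X}^\lambda$ gives $\underline\epsilon*Y_1-Y_1=\sum c_\lambda(\underline\epsilon^{k\lambda}-1)\underline{X}^\lambda$, so if $c_{h_l}=0$ for some $l$, then $\underline{X}^{h_l}$ could never appear as the leading term of any such difference, contradicting the fact that $h_l$ is realised as $O(\underline\epsilon*Y_1,Y_1)$ for some $\underline\epsilon$. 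Hence every $c_j\neq 0$, which gives $H(B)=\{h_1,\dots,h_{i-1}\}$ and $N(B)=N_{i-1}$. Theorem~\ref{Th:3} then produces $n(B)=[N(B)+\bZ h(B):N(B)]=[N_i:N_{i-1}]$.

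It remains to show that $B$ actually has $n(B)$ postbars, all of height $h_{i+1}$. Two distinct roots attaining $O=h_i$ inside $B$ lie in different equivalence classes modulo $h_i^+$, so $B$ has at least two postbars, each of height $h_{i+1}$ by the analysis of (i). Transitivity of $U^d$ on bars of height $h_{i+1}$, which follows from Property~\ref{P:1}(ii) together with uniqueness of the ancestor of a given height, shows any two postbars of $B$ are $U^d$-conjugate; the conjugating element must preserve the common parent $B$ and therefore lies in $\mathrm{Stab}(B)=N(B)^\perp$. Hence all postbars of $B$ form a single conjugacy class under $N(B)^\perp$, and Theorem~\ref{Th:3}(1) forces this class, containing at least two elements, to have $n(B)=[N_i:N_{i-1}]$ members, each supported at a nonzero number. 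Applying transitivity again, now at level $h_i$, transports the conclusion to every bar of height $h_i$, completing~(ii). The main obstacle I anticipate is the identification $N(B)=N_{i-1}$: it is here that the arithmetic condition (C2) of Lemma~\ref{lemma} must surface through the nonvanishing of the coefficients of $\underline{X}^{h_j}$ in $Y_1$.
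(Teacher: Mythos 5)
Your proposal is correct and follows essentially the same route as the paper: part (i) via transitivity of the $U^d$-action on $\Zer f$, the identification $N(B)=N_{i-1}$ through the nonvanishing of the coefficients of $\underline{X}^{h_j}$ in the roots (a fact the paper establishes in the proof of Lemma~\ref{lemma} and reuses here), and Theorem~\ref{Th:3} to count the postbars. You merely supply more detail than the paper's terse argument, in particular the explicit verification that all postbars of a bar are mutually conjugate and hence supported at nonzero numbers, which is the point the paper leaves implicit.
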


\noindent \begin{proof}
Part (i) follows directly from the definition of the sequence of characteristic exponents.
Moreover, since the action of $U^d$ on $\Zer f$ is transitive, every bar of   height $h_i$ has 
only postbars of   height $h_{i+1}$, for $i=1,\dots,g$  and all bars of a fixed height are conjugate. 

\medskip

\noindent Let $B\in T(f)$. To prove part (ii) observe 
that if $h(B)=h_i$ then $N(B)=N_{i-1}$ since the monomials $\underline{X}^{h_j}$ for $1\leq i\leq g$
appear with nonzero coefficients in every $Y(\underline{X})\in \Zer f$. Applying part (ii) of Theorem~\ref{Th:3} 
to $B$ we see that $B$ has $[N_i:N_{i-1}]$ postbars conjugate with a given postbar $B'$ of $B$.
This completes the proof. 
\end{proof}

\medskip

\noindent A tree model $T$ satisfying conditions (i), (ii) of Theorem~\ref{Th:4} will be called the
 \emph{tree of type $(h_1,h_2,\dots,h_g)$.}

\medskip

\begin{Theorem}\label{Th:5}
If the tree model of a quasi-ordinary Weierstrass polynomial $f(Y)\in\bK[[\underline{X}]][Y]$  is  of 
type $(h_1,h_2,\dots,h_g)$ then $f(Y)$ is irreducible and $ h_1,h_2,\dots,h_g$ is the sequence 
of its characteristic exponents. 
\end{Theorem}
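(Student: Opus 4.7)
The plan is to show that for any root $Y_1\in\Zer f$ the $U^d$-orbit $U^d*Y_1$ exhausts $\Zer f$; irreducibility of $f$ in $\bK[[\underline{X}]][Y]$ then follows by the same reasoning used in the proof of Property~\ref{P:1}(ii) (a factor $f_1\mid f$ has a $U^d$-invariant root set by Property~\ref{P:1}(i), so if non-empty it must contain the whole orbit $\Zer f$). Once this orbit property is established, the sequence of characteristic exponents of $f$ can be read off directly from the chain of bars of $T(f)$ passing through $Y_1$.

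The central task is to prove, by induction on $j=1,\dots,g$, that along the chain
\[
B_0\perp_{c_0}B_1\perp_{c_1}\cdots\perp_{c_{g-1}}B_{g-1}
\]
of bars of $T(f)$ containing $Y_1$ (so $h(B_{j-1})=h_j$, and for $j=g$ the corresponding postbar is the singleton $\{Y_1\}$), every $c_{j-1}$ is non-zero and $N(B_{j-1})=N_{j-1}$. The base case $j=1$ is immediate since $B_0$ has no predecessor. For the inductive step I would invoke Theorem~\ref{Th:3} applied to $B_{j-1}$: its postbars split into $\mathrm{Stab}(B_{j-1})$-orbits of size $n(B_{j-1})=[N_{j-1}+\bZ h_j:N_{j-1}]=[N_j:N_{j-1}]$ when supported at a non-zero coefficient, or of size $1$ when supported at zero. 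A supporting observation, coming out of the tree construction, is that distinct postbars of a bar $B$ have distinct $c$-values, because two elements in different postbars have contact exactly $h(B)$ and hence their difference has a non-zero term at exponent $h(B)$; so at most one postbar is zero-supported. The tree type prescribes $[N_j:N_{j-1}]$ postbars for $B_{j-1}$, so the equation $a\,[N_j:N_{j-1}]+b=[N_j:N_{j-1}]$ with $a\geq 0,\ b\in\{0,1\}$ and $[N_j:N_{j-1}]\geq 2$ (every bar has at least two postbars, the minimum contact being attained) forces $a=1$, $b=0$. In particular $c_{j-1}\neq 0$, and $N(B_j)=N_{j-1}+\bZ h_j=N_j$.

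As a consequence, $\underline{X}^{h_j}$ appears in $Y_1$ with non-zero coefficient $c_{j-1}$ for every $j=1,\dots,g$, so $\mathrm{Stab}_{U^d}(Y_1)\subseteq\{h_1,\dots,h_g\}^\perp=N_g^\perp$. Applying Corollary~\ref{orthogonal} to the pairing $(1/k)\bZ^d\times U^d\to U$ with $M=N_g$, $N=\bZ^d$ gives $[U^d:N_g^\perp]=[N_g:\bZ^d]$, and counting the leaves of a tree of type $(h_1,\dots,h_g)$ yields $\deg f=\prod_{i=1}^g[N_i:N_{i-1}]=[N_g:\bZ^d]$. Hence $|U^d*Y_1|\geq\deg f$, forcing $U^d*Y_1=\Zer f$ and yielding irreducibility. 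For the characteristic exponents, fix $Y_1$; for any $Y_j\neq Y_1$ the chains through $Y_j$ and $Y_1$ agree up to a bar $B$ of height some $h_i$ and diverge immediately after, and the distinct-$c$-values observation forces $O(Y_1,Y_j)=h_i$. Every value $h_1,\dots,h_g$ is realised because each bar through $Y_1$ has at least two postbars.

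The main obstacle is the inductive step of the second paragraph: one has to force the non-vanishing of every $c_{j-1}$ simultaneously with the identification $N(B_{j-1})=N_{j-1}$, and this works only because the postbar count prescribed by the tree type matches the single-orbit size $n(B_{j-1})$ exactly, leaving no room for a zero-supported postbar once one knows at most one such postbar can exist.
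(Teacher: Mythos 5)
Your proof is correct and rests on the same pillars as the paper's: Theorem~\ref{Th:3}, Corollary~\ref{orthogonal}, and the observation that a tree of type $(h_1,\dots,h_g)$ has exactly $[N_g:\bZ^d]$ leaves, so both arguments come down to matching this leaf count against the size of a $U^d$-orbit. The paper is slightly more economical — it only exhibits \emph{one} maximal chain of nonzero-supported postbars, ending in a leaf $\bar B$ with $N(\bar B)=N_g$ whose $[N_g:\bZ^d]$ conjugates must then exhaust all leaves — whereas your induction establishes that \emph{every} chain is nonzero-supported; that extra work is not needed for irreducibility, but it does make explicit the identification of $h_1,\dots,h_g$ as the characteristic exponents, which the paper's proof leaves implicit.
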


\noindent \begin{proof}
\noindent By conditions (i) and (ii) 
the tree $T(f)$ has $[N_g:N_{g-1}]\cdot[N_{g-1}:N_{g-2}]\cdots [N_1:N_0]=[N_g:\bZ^d]$ bars of infinite height. 

\medskip

\noindent 
The bar $B$ of $T(f)$ of the lowest height $h(B)=h_1$ hast at least two postbars. Let us choose one of them, $B'$,  which is supported at a nonzero number. 
Taking a similar choice of a postbar of $B'$ and continuing this procedure  $g-1$-times we arrive at a bar $\bar{B}$  of infinite height. It is clear that $N(\bar B)=N_g$. By Theorem \ref{Th:3} the
number of conjugates of $\bar B$ equals $[N_g:\bZ^d]$.

\medskip

\noindent Thus all bars of infinite height are conjugate. It follows that all  the roots of $f(Y)$ are conjugate by the action of $U^d$. Thus $f(Y)$ is irreducible in $\bK[[\underline{X}]][Y]$.
\end{proof}

\section{Irreducibility criterion}

\noindent In this section we consider two Weierstrass polynomials $p(Y)$ and $f(Y)$ such that $\Delta(D_{p})=\Delta(D_{f})$. We prove that $p(Y)$ is an irreducible quasi-ordinary polynomial if and only if $f(Y)$ is also. 

\begin{Theorem}\label{Th:6}
Let $f(Y)$, $p(Y)\in\bK[[\underline{X}]][Y]$ be quasi-ordinary Weierstrass polynomials 
such that $\Delta(D_f)=\Delta(D_{p})$.
Assume that $f(Y)$ is irreducible. Then $p(Y)$ is irreducible
and the sequences of characteristic exponents of $f(Y)$ and $p(Y)$ are equal. 
\end{Theorem}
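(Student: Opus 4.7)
The plan is to translate the hypothesis on discriminant polytopes into a hypothesis on tree polytopes, then argue that the tree of $p$ must inherit the rigid structure of the tree of $f$. By Theorem~\ref{Th:2} the equality $\Delta(D_f)=\Delta(D_p)$ is equivalent to $\Delta_{T(f)}=\Delta_{T(p)}$. Since $f$ is irreducible, Theorem~\ref{Th:4} says that $T(f)$ is a tree of type $(h_1,\dots,h_g)$, where $h_1,\dots,h_g$ are the characteristic exponents of $f$. It therefore suffices to prove that $T(p)$ is also of type $(h_1,\dots,h_g)$, since Theorem~\ref{Th:5} then delivers both the irreducibility of $p$ and the equality of the sequences of characteristic exponents.

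The core of the argument is a combinatorial rigidity lemma: if $T$ is the tree model of any quasi-ordinary Weierstrass polynomial and $\Delta_T=\Delta_{T'}$ for a tree $T'$ of type $(h_1,\dots,h_g)$, then $T$ is itself of type $(h_1,\dots,h_g)$. I would prove this by induction on $g$. The case $g=0$ is immediate: $\Delta_{T'}=\{0\}$ forces $\deg p=1$, so $T(p)$ has no bar of finite height and is trivially of the required type.

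For the inductive step I would read off from $\Delta_T$ the data of the lowest-height bar $B_0\in T$. Since $q$ strictly increases along every edge of the tree by Definition~\ref{page2}(ii), the vector $q(B_0)=\#B_0\cdot h(B_0)=n\cdot h(B_0)$ is the unique minimal element of $\{q(B):B\in\widetilde T\}$ in the partial order on $\bQ^d$. Following the support-function technique used in the proof of Theorem~\ref{Th:2}, each summand $\Bigl\{\Teisssr{(t(B)-1)q(B)}{t(B)-1}{16}{8}\Bigr\}$ contributes $\min\bigl(\langle\underline v,(t(B)-1)q(B)\rangle,v_{d+1}(t(B)-1)\bigr)$ to the support function $l(v,\Delta_T)$. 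Varying $v=(\underline v,v_{d+1})\in\bR_{>0}^{d+1}$ recovers, from $\Delta_T$ alone, the multiset of pairs $\{(q(B),t(B)):B\in\widetilde T\}$. Comparing with the decomposition coming from $T'$ forces $h(B_0)=h_1$ and $t(B_0)=[N_1:N_0]$. Removing the contribution of $B_0$ leaves the Minkowski sum over bars lying above $B_0$, which by Theorem~\ref{Th:4} applied to $T'$ must equal the $[N_1:N_0]$-fold sum of the polytope of a tree of type $(h_2,\dots,h_g)$. The induction hypothesis applied to each of the $[N_1:N_0]$ postbar-subtrees of $B_0$ finishes the step.

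The main obstacle I anticipate is the cancellation step: $\Delta_1+\Delta_2=\Delta_1+\Delta_3$ does not imply $\Delta_2=\Delta_3$ in general, so one must work throughout at the level of support functions rather than via set-theoretic subtraction, and one must check that the multiset $\{(q(B),t(B)):B\in\widetilde T\}$ carries enough information to reconstruct the tree structure (in particular the postbar relation) rather than merely an unordered collection of bars. A more robust alternative would be to combine Corollary~\ref{C:2} with the one-variable analogue of this rigidity lemma (available via the Jacobian Newton polygon in \cite{GB-Gwo} and \cite{Kodai}), applying monomial substitutions $X_i=T^{c_i}$ for enough positive integer vectors $\underline c$ so that the induced one-variable rigidity, together with the way $\underline c$ probes $T(p)$ under projection, determines the full $d$-variable tree structure.
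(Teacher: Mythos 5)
Your reduction (Theorems~\ref{Th:2}, \ref{Th:4}, \ref{Th:5} reduce everything to showing $T(p)$ is of type $(h_1,\dots,h_g)$) matches the paper, but the ``combinatorial rigidity lemma'' at the core of your induction is false for abstract trees, and your proof of it breaks in two related places. First, the support function of $\Delta_T$ is $\sum_{B}(t(B)-1)\min\bigl(\scalar{\underline v}{q(B)},v_{d+1}\bigr)$, so what $\Delta_T$ determines is not the multiset of pairs $(q(B),t(B))$ but only, for each value $q$, the aggregate $\sum_{q(B)=q}(t(B)-1)$: one bar with $t=3$ together with a sibling of infinite height is indistinguishable from two bars with $t=2$ each. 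Second, and decisively, in the inductive step you only know that the Minkowski sum of the $n_1$ postbar-subtree polytopes equals $n_1$ copies of the polytope of a type-$(h_2,\dots,h_g)$ tree; since $A_1+\cdots+A_{n_1}=n_1C$ does not force $A_j=C$, you cannot hand each subtree to the induction hypothesis. Concretely, take $n_1=n_2=2$: the type-$(h_1,h_2)$ tree and the tree whose root has one postbar carrying three infinite branches at height $h'=h_1+\tfrac23(h_2-h_1)$ and one postbar of infinite height have the \emph{same} polytope $\Delta_T$, yet only the first is of the required type. The Remark closing Section~7 already warns that $\Delta(D_f)$ does not determine $T(f)$ in general, so no purely polytope-theoretic argument can work.

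What closes the gap in the paper is the symmetry machinery of Section~\ref{symmetry}, which your argument never invokes: $T(p)$ is not an arbitrary tree but carries the $U^d$-action. The paper first gets $\deg p=\deg f$ from the coefficient of $V^{n-1}$ in the discriminant (a point you also need and do not establish for the top-level identification $h(B_0)=h_1$), and then makes the crucial move: since $N(B_0)=\bZ^d$ and $B_0$ has exactly $n_1=n(B_0)=[N_0+\bZ h_1:N_0]$ postbars, Corollary~\ref{C:3} forces all postbars of $B_0$ to be \emph{conjugate} and supported at nonzero coefficients. Conjugate bars carry identical subtrees (same heights, same $q$, same $t$, and $N(B')=N_1$), which is exactly what licenses distributing the aggregate polytope equally among the subtrees and propagating the lattice data one level up; Theorem~\ref{Th:3} then pins down the number of postbars at the next level. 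You would need to import this conjugacy argument into your induction for it to go through. (Your fallback via monomial substitution to $d=1$ is too vague to assess; note in particular that irreducibility of $p(T^{c_1},\dots,T^{c_d},Y)$ neither follows from nor directly yields irreducibility of $p$, and the arithmetic conditions in one variable lose the $d$-dimensional lattice indices $[N_i:N_{i-1}]$ that the criterion requires.)
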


\noindent \begin{proof}
Let $h_1,\dots,h_g$ be the sequence of characteristic exponents of $f(Y)$. 
By Theorem~\ref{Th:4} the tree model $T(f)$ is of  type $(h_1,\dots,h_g)$.
By Theorem~\ref{Th:5}  it is enough to show that $T(p)$ is also a tree  of  type $(h_1,\dots,h_g)$.

\medskip

\noindent First we will show that  the polynomials $f(Y)$ and $p(Y)$ have the same degree. 
If $f(Y)=Y^n+a_1Y^{n-1}+\cdots+ a_n$ then 
$\mbox{Discr}_Y(f(Y)-V)=d_0V^{n-1}+d_1V^{n-2}+\cdots+d_{n-1}$, where 
$d_0=(-1)^{(n+2)(n-1)/2}n^n$ (see \cite{P}, Lemma 2.1). It follows that $(\underline{0},\deg_Y f(Y)-1)$ is the point of the intersection of $\Delta(D_f)$ with the vertical axis having the smallest last coordinate. 
Thus 
the equality of the Newton polytopes $\Delta(D_f)$ and $\Delta(D_p)$ 
gives
$\deg f(Y)=\deg p(Y)$. 

\medskip

\noindent Now, let us compute recursively  the $d$-tuples $q(B)$ for $B\in T(f)$. 
Under the notations of Theorem~\ref{Th:4} we set $n_0=1$ and $n_i$=$[N_i:N_{i-1}]$ for $i=1,\dots,g$. 
By the symmetry of $T(f)$ every bar $B$ of height $h_i$, where $1\leq i\leq g$, has $n_0\cdots n_{i-1}$ conjugates. 
Moreover, by Definition \ref{page2} $q(B)$ is constant on the bars of the same height; we denote $q_i:=q(B)$ for such $B\in T(f)$ that
$h(B)=h_i$. We have 
\begin{equation}\label{Eq:R}
\begin{array}{lll}
q_1=n_1\cdots n_g h_1, & & \\
q_i=q_{i-1}+n_i\cdots n_g(h_i-h_{i-1}) & & \mbox{ for $i=2,\dots, g$.}
\end{array}
\end{equation}
\medskip

\noindent Hence 
\begin{equation}\label{Eq:6}
\Delta_{T(f)}= 
\sum_{i=1}^g \left\{\Teisssr{n_0\cdots n_{i-1}(n_i-1)q_i}{n_0\cdots n_{i-1}(n_i-1)}{20}{10}\right\}. 
\end{equation}

\noindent By (\ref{pol}) 
\begin{equation}\label{Eq:7}
\Delta_{T(p)}=\sum_{B\in \widetilde{T}(p)}\left\{\Teisssr{(t(B)-1)q(B)}{t(B)-1}{16}{8}\right\} .
\end{equation} 

\noindent Using the assumption $\Delta(D_f)=\Delta(D_{p})$ and Theorem~\ref{Th:2} we see that  polytopes given by (\ref{Eq:6}) and (\ref{Eq:7}) are equal. Hence
$\{\, q(B):B\in T(p)\,\}=\{q_1,\dots,q_g\}\cup\{\infty\}$. 

\medskip

\noindent Let $H_i=\{\,B\in T(p): q(B)=q_i\,\}$ for $i=1,\dots,g$. 
We will show, by induction on  $i$,  that
the set $H_i$ has $n_0\cdots n_{i-1}$ elements, the elements of $H_i$ are conjugate 
and  form a partition of $\Zer p$. 
Moreover, for every $B\in H_i$ we have
$h(B)=h_i$, $N(B)=N_{i-1}$ and $B$ has $n_i$ postbars which are conjugate.

\medskip
\noindent Let $B_0=\Zer p$ be the bar of the tree model $T(p)$ of the minimal height. 
Clearly $q(B_0)=q_1$ and $H_1=\{B_0\}$.
Since $B_0$ has $\deg p(Y)=\deg f(Y)=n_1\cdots n_g$ elements 
we get from (\ref{Eq:R}) and the formula for $q(B_0)$ (see Definition \ref{page2}) the equality $h(B_0)=h_1$. 

\medskip

\noindent Since $\Delta_{T(f)}=\Delta_{T(p)}$ we get from (\ref{Eq:6}) and (\ref{Eq:7}) the equality
$$ 
\left\{\Teisssr{(t(B_0)-1)q(B_0)}{t(B_0)-1}{16}{8}\right\} = 
\left\{\Teisssr{(n_1-1)q_1}{n_1-1}{10}{5}\right\}.
$$

\noindent Hence $B_0$ has $n_1$ postbars. Since $N(B_0)=\bZ^d$, 
we get $n(B_0)=[N(B_0)+\bZ h_1:N(B_0)]=[N_1:N_0]=n_1$ and 
by Corollary~\ref{C:3} all the postbars of $B_0$ are conjugate.

\medskip
\noindent Assume that the set $H_i$ has the desired properties.  We will prove them for $H_{i+1}$. 

\noindent Since $q(B)<q(B')$ for $B\perp B'$,  all the elements of  $H_{i+1}$ 
are postbars of the elements of $H_i$. By the inductive hypothesis all the
postbars  of the elements of $H_i$ are conjugate under the action of $U^d$. 
Hence all of them have the same height and  $H_{i+1}=\{\,B'\in T(p): B\perp B', B\in H_i\,\}$.  Since every $B\in H_i$ has $n_i$ postbars, by Corollary
\ref{C:3} every postbar $B'$ of $B$ is supported at  a nonzero number and $N(B')=N(B)+ \bZ h_i=N_i$.
The set $H_{i+1}$ has $n_0\cdots n_i$ elements, $H_{i+1}$  is a partition of $\Zer p$, 
 and every $B'\in H_{i+1}$ has $n_{i+1}\cdots n_g$ elements.

\medskip
\noindent Since the polytopes given in  (\ref{Eq:6}) and (\ref{Eq:7}) are equal, we get 
$$
\left\{\Teisssr{n_0\cdots n_{i}(n_{i+1}-1)q_{i+1}}{n_0\cdots n_{i}(n_{i+1}-1)}{22}{11}\right\}= 
\sum_{B\in H_{i+1}} \left\{\Teisssr{(t(B)-1)q(B)}{t(B)-1}{16}{8}\right\} = 
n_0\cdots n_{i}\left\{\Teisssr{(t(B')-1)q(B')}{t(B')-1}{16}{8}\right\} ,
$$
where $B'$ is a fixed element of $H_{i+1}$.  Consequently $B'$ has $n_{i+1}$ postbars.  

\medskip

\noindent
 By  Definition \ref{page2} we have $q(B')=q(B)+\#B'(h(B')-h(B))$ for $B\perp B'$.
 If $B\in H_i$ and  $B'\in H_{i+1}$, this gives us $q_{i+1}=q_i+(n_{i+1}\cdots n_g)(h(B')-h_i)$.
 Using formula~(\ref{Eq:R}) we get $h(B')=h_{i+1}$. 

\medskip

\noindent
Once we know the height $h(B')$ we also know  that $n(B')=[N_{i+1}:N_i]=n_{i+1}$. 
Hence by Corollary~\ref{C:3} $B'$ has $n_{i+1}$ postbars and all of them are conjugate.
\end{proof} 

\section{Arithmetical test of irreducibility}

\noindent In this section we consider Newton polytopes $\Delta=\sum_{i=1}^g \Bigl\{\Teis{L_i}{M_i}\Bigr\}\subset \bR_{\geq0}^{d+1}$,  where $\frac{1}{M_1}L_1<\frac{1}{M_2}L_2<\cdots < \frac{1}{M_g}L_g$. We associate to $\Delta$ the sequences:
\begin{enumerate}
\item $H_0=1$, $H_i=1+M_1+\dots+M_i$ for  $i\in \{1,\dots,g\}$,
\item $\gamma_i=\frac{H_{i-1}}{M_i}L_i$ for $i\in \{1,\dots,g\}$
\end{enumerate}

\noindent and the sequence of lattices $W_i=H_g\bZ^d+\bZ \gamma_1+\cdots+\bZ \gamma_i$, for $i\in \{0,\dots,g\}$.  We say that $\Delta$ is an $I$-{\em polytope} if and only if $[W_i:W_{i-1}]=H_i/H_{i-1}$ for $i\in \{1,\dots,g\}$.
Note that the $I$-polytopes  for $d=1$ are called {\em Merle polygons} in \cite{Kodai}.

\medskip

\noindent  The reader interested in computing the indices $[W_i:W_{i-1}]$, in an effective way, is encouraged to read Section (5.9) (page 469)  of \cite{Lipman}. For the convenience of the reader we prove this result in the appendix.

\medskip

\noindent Theorem \ref{Th:6} allows us to present  an arithmetical test of irreducibility for quasi-ordinary Weierstrass polynomials:

\begin{Theorem}\label{Theo1}
Let $f\in \bK[[X_1,\dots,X_d]][Y]$ be a Weierstrass polynomial. 
Then $f$ is irreducible and quasi-ordinary if and only if $\Delta(D_f)$ is an $I$-polytope.
\end{Theorem}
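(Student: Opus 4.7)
The plan is to use Theorems~\ref{Th:2}, \ref{Th:4} and the main Theorem~\ref{Th:6} to translate between the arithmetic $I$-polytope condition and the combinatorial conditions (C1), (C2) of Lemma~\ref{lemma}.

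For the forward direction, suppose $f$ is irreducible and quasi-ordinary with characteristic exponents $h_1,\dots,h_g$. Setting $n_i=[N_i:N_{i-1}]$, Theorems~\ref{Th:2} and~\ref{Th:4} combined with~(\ref{Eq:6}) yield an explicit expression of $\Delta(D_f)$ as a Minkowski sum of elementary polytopes with $L_i=H_{i-1}(n_i-1)q_i$ and $M_i=H_{i-1}(n_i-1)$, where $H_{i-1}=n_1\cdots n_{i-1}$. I would verify by induction that $H_i=1+M_1+\cdots+M_i=n_1\cdots n_i$ and that the ratios $L_i/M_i=q_i$ are strictly increasing in the partial order because the $h_i$ are, which secures the required decomposition. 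The key step is to identify the lattice $W_i$ from the $I$-polytope definition: starting from $\gamma_i=H_{i-1}q_i$ and unwinding the recursion~(\ref{Eq:R}), one obtains an expansion $\gamma_i=H_g h_i+\sum_{j<i}c_{ij}H_g h_j$ with $c_{ij}\in\bZ$; this yields $W_i=H_g N_i$ and therefore $[W_i:W_{i-1}]=[N_i:N_{i-1}]=n_i=H_i/H_{i-1}$, which is precisely the $I$-polytope condition.

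For the converse, suppose $\Delta(D_f)=\sum_{i=1}^{g}\{\Teis{L_i}{M_i}\}$ is an $I$-polytope. My strategy is to recover candidate characteristic exponents from the polytope data, apply Lemma~\ref{lemma} to produce an auxiliary irreducible quasi-ordinary polynomial $p$ with the same discriminant polytope, and then invoke Theorem~\ref{Th:6}. Concretely I would set $H_i=1+M_1+\cdots+M_i$, $n_i=H_i/H_{i-1}$ (an integer $\geq 2$ by the $I$-polytope hypothesis, since $M_i\geq 1$), $q_i=L_i/M_i$, and define $h_1=q_1/H_g$ together with $h_i=h_{i-1}+(q_i-q_{i-1})/(n_i\cdots n_g)$ for $i\geq 2$. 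The strict partial-order increase of the $q_i$ immediately yields (C1) and puts $h_i\in\bQ_{\geq 0}^{d}$. For (C2), reversing the computation from the forward direction gives $W_i=H_g N_i$, so that $[N_i:N_{i-1}]=[W_i:W_{i-1}]=n_i\geq 2$, forcing $h_i\notin N_{i-1}$. Lemma~\ref{lemma} then produces an irreducible quasi-ordinary Weierstrass polynomial $p$ with characteristic exponents $h_1,\dots,h_g$, and the already-established forward direction gives $\Delta(D_p)=\Delta(D_f)$.

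It remains to verify that $f$ itself is quasi-ordinary before applying Theorem~\ref{Th:6}. Since $(L_1+\cdots+L_g,0)$ is a vertex of the Minkowski sum $\Delta(D_f)$, the monomial $\underline{X}^{L_1+\cdots+L_g}$ appears with nonzero coefficient in $D_f(\underline{X},0)=\mathrm{Discr}_Y f(Y)$; together with the inclusion of the Newton polytope of $D_f(\underline{X},0)$ in the $V=0$ slice $L_1+\cdots+L_g+\bR_{\geq 0}^{d}$, this forces $D_f(\underline{X},0)$ to be a monomial times a unit, so $f$ is quasi-ordinary. Theorem~\ref{Th:6} applied to $(p,f)$ then concludes that $f$ is irreducible. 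The main obstacle I anticipate is the clean verification of the lattice identity $W_i=H_g N_i$ in both directions; this requires tracking how the recursive formula~(\ref{Eq:R}) expresses $q_i$ as an integer combination of the $h_j$, and once established, the arithmetic $I$-polytope condition and condition (C2) become equivalent, making the remainder of the proof routine.
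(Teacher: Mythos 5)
Your proposal is correct and follows essentially the same route as the paper: the forward direction via the explicit Minkowski decomposition~(\ref{Eq:6}) together with the lattice identity $W_i=H_g N_i$ obtained by unwinding the recursion~(\ref{Eq:R}), and the converse by reconstructing candidate characteristic exponents, invoking Lemma~\ref{lemma} to produce an auxiliary irreducible quasi-ordinary polynomial with the same discriminant polytope, and applying Theorem~\ref{Th:6}. Your explicit verification that $f$ is quasi-ordinary before applying Theorem~\ref{Th:6} (via the single vertex of the $V=0$ slice of the $I$-polytope) is a step the paper leaves implicit, and it is a welcome addition.
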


\noindent \begin{proof} 
Let $f$ be an irreducible quasi-ordinary Weierstrass polynomial
and let $h_1,\dots,h_g$ be the sequence of its characteristic exponents. 
By Lemma~\ref{lemma}  the numbers $n_i=[N_i:N_{i-1}]$, where 
$N_i=\bZ^d+\bZ h_1+\cdots+\bZ h_i$, are bigger than 1 for $i=1,\dots,g$. 
Consider an auxiliary sequence $\tilde\gamma_1,\dots, \tilde\gamma_g$ given by recurrence relations
\begin{equation}\label{Eq:R'}
\begin{array}{lll}
\tilde\gamma_1= h_1, & & \\
\tilde \gamma_i=n_{i-1}\tilde\gamma_{i-1}+h_i-h_{i-1} & & \mbox{ for $i=2,\dots, g$.}
\end{array}
\end{equation}
Let $n=n_1\cdots n_g$ and let $\gamma_i=n\tilde\gamma_i$.
Then it follows from ~(\ref{Eq:R}) and (\ref{Eq:6}) that 
\begin{equation}\label{Eq:6'}
\Delta_{T(f)}= 
\sum_{i=1}^g \left\{\Teisssr{(n_i-1)\gamma_i}{n_0\cdots n_{i-1}(n_i-1)}{20}{10}\right\}. 
\end{equation}

\noindent Let $L_i$ and $M_i$ denote the numerator and the denominator of the $i$-th term
of (\ref{Eq:6'}). It is easy to show by induction that 
$H_i:=1+M_1+\dots+M_i=n_1\cdots n_i$ for $i=1,\dots, g$. 
Hence $\gamma_i=(H_{i-1}/M_i)L_i$ for $i=1,\dots, g$.

\medskip

\noindent It follows from~(\ref{Eq:R'}) that 
$N_i=\bZ^d+\bZ h_1+\cdots+\bZ h_i=\bZ^d+\bZ\tilde\gamma_1+\cdots+\bZ\tilde\gamma_i$.
Since $H_g=n$ and $\gamma_i=n\tilde\gamma_i$ for $i=1,\dots,g$, we get 
$W_i=nN_i$  for $i=0,\dots,g$. This gives the arithmetic conditions 
$[W_i:W_{i-1}]=[N_i:N_{i-1}]=n_i=H_i/H_{i-1}$ for $i=1,\dots,g$.

\medskip

\noindent It remains to show that $\frac{1}{M_1}L_1<\frac{1}{M_2}L_2<\cdots < \frac{1}{M_g}L_g$.
Each inequality $(1/M_{i-1})L_{i-1}<(1/M_i)L_i$ 
can be written in equivalent form $n_{i-1}\gamma_{i-1}<\gamma_i$ which by~(\ref{Eq:R'}) is 
equivalent to $h_{i-1}<h_i$. Since characteristic exponents form an increasing sequence, this part 
of the proof is finished. 

\medskip

\noindent We proved that $\Delta_{T(f)}$, which is the Newton polytope of $D_f$,  
is an $I$-polytope.

\medskip

\noindent Now, assume that  $\Delta(D_f)=\sum_{i=1}^g \Bigl\{\Teis{L_i}{M_i}\Bigr\}$ is an $I$-polytope.  Let $n_i=H_i/H_{i-1}$ for $i=1,\dots g$. 
Then $n_i$ are integers bigger than 1 and $H_i=n_1\cdots n_i$ for $i=1,\dots, g$. 
We get $M_i=H_i-H_{i-1}=n_1\cdots n_{i-1}(n_i-1)$ 
and  $L_i=(M_i/H_{i-1})\gamma_i=(n_i-1)\gamma_i$
for $i=1,\dots, g$. 

\medskip

\noindent Let $n=n_1\cdots n_g$ and let $\tilde\gamma_i=(1/n)\gamma_i$ for $i=1,\dots, g$. 
This time we use the recurrence relations~(\ref{Eq:R'}) to define the sequence $h_1,\dots, h_g$.
As in the first part of the proof we can show that if $N_i=\bZ^d+\bZ h_1+\cdots+\bZ h_i$ then 
$W_i=nN_i$. This gives $[N_i:N_{i-1}]=[W_i:W_{i-1}]=n_i>1$ for $i=1,\dots,g$. Therefore 
$N_0\subsetneq N_1 \subsetneq \cdots \subsetneq N_g$.  

\medskip

\noindent  Again, as in the first part of the proof, we show that the inequalities 
$\frac{1}{M_1}L_1<\frac{1}{M_2}L_2<\cdots < \frac{1}{M_g}L_g$ are equivalent to
 the inequalities $h_1<h_2<\cdots<h_g$. 
We have shown that $h_1,\dots, h_g$ is a sequence of characteristic exponents of some 
irreducible quasi-ordinary Weierstrass polynomial $f_1$. By construction of this sequence
and by~(\ref{Eq:6'}) we get $\Delta_{T(f_1)}=\Delta_{T(f)}$. Hence by Theorem~\ref{Th:6}
$f$ is an irreducible quasi-ordinary Weierstrass polynomial. 
\end{proof}

\medskip

\noindent  Kiyek and Micus (\cite{K-M}) introduced the {\em semigroup} of an irreducible quasi-ordinary hypersurface $f(Y)=0$. Later Gonz\'alez P\'erez and Popescu-Pampu introduced again the semigroup  in their 
thesis (\cite{GP-tesis}, \cite{PP-tesis}), using different but equivalent definitions. This is the semigroup 
$ \deg f\bZ_{\geq 0}^d+\bZ_{\geq 0} \gamma_1+\cdots+\bZ_{\geq 0} \gamma_g,$
where $\gamma_1,\ldots, \gamma_g$ is the sequence defined in Theorem \ref{Theo1}.

\medskip

\noindent Since the Newton polytope $\Delta(D_f)$, for an irreducible quasi-ordinary polynomial $f(Y)$,  determines its semigroup, it also determines the sequence of characteristic exponents (see \cite{GP-tesis} and \cite{PP-tesis}). Observe that the proof of Theorem \ref{Th:6} gives us the sequence of characteristic exponents by using the equalities (8).

\medskip

\begin{Example}[\cite{Assi}, Example 1]
Consider $f_1 (Y)= Y^8-2X_1X_2Y^4+X_1^2X_2^2-X_1^3X_2^2 \in \bK[[X_1, X_2]][Y]$. We get 
$D_{f_1}(X_1,X_2,V)=-16777216(V-X_1^2X_2^2+X_1^3X^2)^3(V+X_1^3X_2^2)^4$, so $$\Delta(D_{f_1})=3\left\{\Teisssr{(2,2)}{1}{4}{2}\right\}
+4\left\{\Teisssr{(3,2)}{1}{6}{3}\right\}=\left\{\Teisssr{(6,6)}{3}{4}{2}\right\}+\left\{\Teisssr{(12,8)}{4}{6}{3}\right\}.$$

\noindent We get $H_0=1$, $H_1=4$, $H_2=8$, $\gamma_1=(2,2)$ and $\gamma_2=(12,8)$. We have $[W_1:W_0]=4=H_1/H_0$ and $[W_2:W_1]=2=H_2/H_1$, and we deduce that $f_1$ is irreducible.
\end{Example}

\medskip
\begin{Example}[\cite{Assi}, Example 2] 
Consider $f_2 (Y)= Y^8-2X_1X_2Y^4+X_1^2X_2^2-X_1^4X_2^2-X_1^5X_2^3 \in \bK[[X_1, X_2]][Y]$. We get  $D_{f_2}(X_1,X_2,V)=-16777216(V-X_1^2X_2^2+X_1^4X_2^2+X_1^5X_2^3)^3(V+X_1^4X_2^2+X_1^5X_2^3)^4$, so
$$\Delta(D_{f_2})=3\left\{\Teisssr{(2,2)}{1}{4}{2}\right\}+4\left\{\Teisssr{(4,2)}{1}{4}{2}\right\}=\left\{\Teisssr{(6,6)}{3}{4}{2}\right\}+\left\{\Teisssr{(16,8)}{4}{6}{3}\right\}.$$

\noindent We get $H_0=1$, $H_1=4$, $H_2=8$, $\gamma_1=(2,2)$ and $\gamma_2=(16,8)$. We have $[W_1:W_0]=4=H_1/H_0$ but $[W_2:W_1]=1\neq 2=H_2/H_1$, and we deduce that $f_2$ is  not irreducible.
\end{Example}

\medskip

\begin{Example}[\cite{Assi}] 

\noindent This is the Example 3\footnote{There is a typo in the equation of this example in \cite{Assi}. A. Assi communicated to us the right equation of this example.} in \cite{Assi}. Consider $f_3 (Y)= Y^8-2X_1X_2Y^4+X_1^3X_2^2-X_1^3X_2^5 \in \bK[[X_1, X_2]][Y]$. We get  $D_{f_3}=
-16777216(V+X_1^2X_2^2-X_1^3X_2^2+X_1^3X_2^5)^4(V-X_1^3X_2^2+X_1^3X_2^5)^3$, so
$$\Delta(D_{f_3})=4\left\{\Teisssr{(2,2)}{1}{4}{2}\right\}+3\left\{\Teisssr{(3,2)}{1}{4}{2}\right\}=\left\{\Teisssr{(8,8)}{4}{4}{2}\right\}+\left\{\Teisssr{(9,6)}{3}{4}{2}\right\}.$$

\noindent We get $H_0=1$, $H_1=5$, $H_2=8$. Thus $H_2/H_1$ is not an integer number, 
so $[W_2:W_1]\neq H_2/H_1$  and we deduce that $f_3$ is  not irreducible.
\end{Example}

\medskip

\begin{Remark} 
In general  $\Delta(D_f)$ does not determine $T(f)$ as shown in \cite{Eggers} and \cite[Proposition 2.2]{Lenarcik}. But in the above examples it does.
To obtain the tree models it is enough to remember that $\Delta(D_f)=\Delta_{T(f)}$ and use Definition \ref{page2} and Theorem~\ref{Th:3}. 
The appropriate tree models with indicated heights of bars are drawn below:

$$\mbox{%
\begin{picture}(100,60)(0,0)
\put(35,0){\line(0,1){15}} 
\put(25,-10){$T(f_1)$}
{\thicklines \put(5,15){\line(1,0){60}}} \put(68,13){$(\frac14,\frac14)$}
\put(5,15){\line(0,1){20}} 
{\thicklines \put(0,35){\line(1,0){10}}} 
\put(0,35){\line(0,1){15}} 
\put(10,35){\line(0,1){15}} 
\put(25,15){\line(0,1){20}} 
{\thicklines \put(20,35){\line(1,0){10}}} 
\put(20,35){\line(0,1){15}}
\put(30,35){\line(0,1){15}}
\put(45,15){\line(0,1){20}} 
{\thicklines \put(40,35){\line(1,0){10}}} 
\put(40,35){\line(0,1){15}}
\put(50,35){\line(0,1){15}}
\put(65,15){\line(0,1){20}}
{\thicklines \put(60,35){\line(1,0){10}}} \put(73,33){$(\frac34,\frac14)$}
\put(60,35){\line(0,1){15}} 
\put(70,35){\line(0,1){15}} 
\end{picture}} \qquad
\mbox{%
\begin{picture}(100,60)(0,0)
\put(35,0){\line(0,1){15}} 
\put(25,-10){$T(f_2)$}
{\thicklines \put(5,15){\line(1,0){60}}} \put(68,13){$(\frac14,\frac14)$}
\put(5,15){\line(0,1){20}} 
{\thicklines \put(0,35){\line(1,0){10}}} 
\put(0,35){\line(0,1){15}} 
\put(10,35){\line(0,1){15}} 
\put(25,15){\line(0,1){20}} 
{\thicklines \put(20,35){\line(1,0){10}}} 
\put(20,35){\line(0,1){15}}
\put(30,35){\line(0,1){15}}
\put(45,15){\line(0,1){20}} 
{\thicklines \put(40,35){\line(1,0){10}}} 
\put(40,35){\line(0,1){15}}
\put(50,35){\line(0,1){15}}
\put(65,15){\line(0,1){20}}
{\thicklines \put(60,35){\line(1,0){10}}} \put(73,33){$(\frac54,\frac14)$}
\put(60,35){\line(0,1){15}} 
\put(70,35){\line(0,1){15}} 
\end{picture}} \qquad
\mbox{%
\begin{picture}(100,60)(0,0)
\put(35,0){\line(0,1){15}} 
\put(25,-10){$T(f_3)$}
{\thicklines \put(0,15){\line(1,0){55}}} \put(58,13){$(\frac14,\frac14)$}
\put(0,15){\line(0,1){35}} 
\put(10,15){\line(0,1){35}} 
\put(20,15){\line(0,1){35}} 
\put(30,15){\line(0,1){35}} 
\put(55,15){\line(0,1){20}}
{\thicklines \put(40,35){\line(1,0){30}}} \put(73,33){$(\frac12,\frac14)$}
\put(40,35){\line(0,1){15}} 
\put(50,35){\line(0,1){15}} 
\put(60,35){\line(0,1){15}} 
\put(70,35){\line(0,1){15}} 
\end{picture}} $$

\medskip

\end{Remark}

\section{Discriminant of a $Y$-regular power series}
 \label{qo-series}
 
 \noindent In this section we generalize the notion of the discriminant $D_f(\underline{X},V)$, 
which was previously 
defined for Weierstrass polynomials, to an arbitrary $Y$-{\em regular power} series. 

\medskip

\noindent 
We say that a power series $f(\underline{X},Y)\in\bK[[\underline{X},Y]]$ is $Y$-regular of order $n$ if 
$f(0,Y)=cY^n+\hbox{\em higher order terms}$ with $c\neq0$. 

\medskip

\noindent Assume that $f\in\bK[[\underline{X},Y]]$ is $Y$-regular of order $n$. 
By Weierstrass preparation theorem 
for every $g\in\bK[[\underline{X},Y,V]]$ there exist a unique $q\in\bK[[\underline{X},Y,V]]$ and $a_0, \dots, a_{n-1}\in\bK[[\underline{X},V]]$ such that 
\[ g=(f-V)q+\sum_{i=0}^{n-1}a_iY^i .\]

\medskip

\noindent It follows that the quotient ring $A=\bK[[\underline{X},Y,V]]/(f-V)$ is a free $\bK[[\underline{X},V]]-$ module which admits the basis 
$1$, $\overline Y$, \dots, $\overline Y^{n-1}$, where $\overline Y$ is the coset of $Y$ in $A$. 
Let $\Phi_g:A\to A$ be an $\bK[[\underline{X},V]]$-endomorphism  induced by the multiplication 
$\bK[[\underline{X},Y,V]]\ni h\to g h \in \bK[[\underline{X},Y,V]]$. 

\medskip
\noindent
We put by definition ${\bD}_f(\underline{X},V)=\det \Phi_{\frac{\partial f}{\partial Y}}$.

\begin{Property}\ 
\label{orion}
\begin{itemize}
\item[(i)] If $f(\underline{X},Y)$ is a Weierstrass polynomial in the variable $Y$
               then $\bD_f(\underline{X},V)$ is equal to $D_f(\underline{X},V)$.
\item[(ii)] $\bD_f(\underline{X},V)$ belongs to the ideal $I=\Bigl(f-V, \frac{\partial f}{\partial Y}\Bigr)\bK[[\underline{X},Y,V]]$. Moreover 
                the radicals of the  ideals $(\bD_f)\bK[[\underline{X},V]]$ and $I\cap \bK[[\underline{X},V]]$ are the same.
\item[(iii)] Let $g(T,Y)=f(T^{c_1},\dots,T^{c_d},Y)$. Then $\bD_g(T,V)=\bD_f(T^{c_1},\dots,T^{c_d},V)$.
\item[(iv)] If $f(X,Y)\in\bK[[X,Y]]$ is a $Y$-regular power series in two variables and 
$\frac{\partial f}{\partial Y}(X,Y)=u(X,Y)\prod_{i=1}^{n-1}[Y-Y_i(X)]$
is a Newton-Puiseux factorization of 
its partial derivative then $\bD_f(X,V)=u'(X,V)\prod_{i=1}^{n-1}[f(X,Y_i(X))-V]$
where $u'(X,V)$ is a unity in $\bK[[X,V]]$.
\item[(v)] If $f(X,Y)\in\bC\{X,Y\}$ then $\bD_f(u,v)=0$ is an equation of the discriminant curve 
of the holomorphic mapping germ $(\bC^2,0)\to(\bC^2,0)$, $(u,v)=(x,f(x,y))$
in the sense of Casas-Alvero~\cite{Casas-Asian}.
\end{itemize}
\end{Property}

\noindent \begin{proof}
\begin{itemize}
\item[(i)] Let $n$ be the $Y$-degree of $f$. 
Then the $Y$-discriminant of $f-V$ is the determinant 
of the matrix of $\Phi_{\frac{\partial f}{\partial Y}}$ 
with respect to the basis $1$, $\overline Y$, \dots, $\overline Y^{n-1}$ 
(see \cite{Risler}, Appendix D.3.6).

\item [(ii)] The mapping  $\Phi:=\Phi_{\frac{\partial f}{\partial Y}}$ induces the exact sequence

\[
A\stackrel{\Phi}{\longrightarrow} A\longrightarrow \bK[[\underline{X},Y,V]]/I \longrightarrow 0.
\]

\noindent By definition (see 
\cite{Greuel}, Section 7.2), $(\bD_f)\bK[[\underline{X},V]]$ is the $0$-th Fitting ideal of the $\bK[[\underline{X},V]]$-module $\bK[[\underline{X},Y,V]]/I $. On the other hand $I\cap \bK[[\underline{X},V]]$ is the annihilator of $\bK[[\underline{X},Y,V]]/I$. By Proposition 20.6 of \cite{Eisenbud} (see also \cite{Greuel}, Exercise 7.2.5), we get the equality of the radicals.

\item[(iii)] Suppose that $f$ is $Y$-regular of order $n$. If 
$$Y^i\frac{\partial f}{\partial Y}=\sum_{j=0}^{n-1} m_{ij}(X_1,\ldots,X_d,V)Y^j+h_i(\underline{X},Y,V)(f(\underline{X},Y)-V)$$ then 
$$Y^i\frac{\partial g}{\partial Y}=\sum_{j=0}^{n-1} m_{ij}(T^{c_1},\ldots,T^{c_d},V)Y^j+h_i(T^c,Y,V)(g(T,Y)-V).$$
These relations, for $i=0,\ldots,n-1$, imply that $\bD_f(T^{c_1},\dots,T^{c_d},V)=\det (m_{ij}(T^{c_1},\dots,T^{c_d},V)_{n\times n})$
is equal to $\bD_g(T,V)$.

\item[(iv)] Suppose that $Y_i(X)$ are power series for $i=1,\ldots,n-1$. Since $\Phi_{gh}=\Phi_{g}\circ \Phi_{h}$ we get
$\bD_f(X,V)=\det \Phi_{\frac{\partial f}{\partial Y}}=
\det \Phi_{u(X,Y)}\prod_{i=1}^{n-1}\det \Phi_{Y-Y_i(X)}$. 
Moreover $\det \Phi_u \cdot \det \Phi_{u^{-1}}=\det (\hbox{\rm id})=1$. The substitution of $Y_i(X)$ for $Y$   determines an isomorphism between the $\bK[X,V]$-modules  $\bK[[X,Y,V]]/(f(X,Y)-V, Y-Y_i(X))$ and 
$\bK[[X,V]]/(f(X,Y_i(X))-V)$.
Hence the ideal generated by $\det \Phi_{Y-Y_i(X)}$, which is the $0$-Fitting ideal of both modules, is equal to $(f(X,Y_i(X))-V)\bK[[X,V]]$. The proof in this case is finished.

\medskip

\noindent  Let us consider  the general situation.
 There exists a natural number $m$ such that $\frac{\partial f}{\partial y}(T^m,Y)=u(T^m,Y) \prod_{i=1}^{n-1}(Y-Y_i(T^m))$ is a factorization in $\bK[[T,Y]]$. Using $(iii)$ and applying $(iv)$, in the case proved before,  to $g(T,Y):=f(T^m,Y)$ we get
 \begin{equation}
 \label{emu}
\bD_f(T^m,V)=\bD_g(T,V)=u'(T,V)\prod_{i=1}^{n-1}(f(T^m,Y_i(T^m))-V).
 \end{equation}
 
 \noindent By definition $\bD_f(T^m,V)\in \bK[[T^m,V]]$. Denote by $P(T,V)$ the product $\prod_{i=1}^{n-1}(f(T^m,Y_i(T^m))-V)$ appearing in~(\ref{emu}).
Let $\epsilon\in \bK$ be an $m$-th primitive root of unity.
Since $Y_i(T^m) \to Y_i((\epsilon \,T)^m)$ is a permutation of the roots of the derivative of $g$, we have $P(\epsilon \, T,V)=P(T,V)$, and consequently $P(T,V)\in \bK[[T^m,V]]$.
 
 We claim that $u'(T,V)=u''(T^m,V)$ for some $u''\in \bK[[X,V]]$. Indeed substituting $\epsilon \, T$ for $T$ in (\ref{emu}) 
  we get $u'(\epsilon \, T, V)=u'(T,V)$ which shows that $u'(T,V)\in \bK[[T^m,V]]$. 
 We get
 $\bD_f(X,V)=u''(X,V)\prod_{i=1}^{n-1}(f(X,Y_i(X))-V)$.

\item[(v)] The formula in $(iv)$ determines the equation of the discriminant curve in the sense of Casas-Alvero (see \cite{Kodai}, Lemma 4.5 in Appendix).
\end{itemize}
\end{proof}

\medskip

\noindent Remark that $\bD_f(\underline{X},V)$ extends, in a natural way, the definition of $D_f(\underline{X},V)$. 

\medskip

\begin{Theorem}
\label{TTT}
Let $f_1(\underline{X},Y)\in \bK[[\underline{X}]][Y]$ 
be a Weierstrass polynomial and let 
$f_2(\underline{X},Y)=u(\underline{X},Y)f_1(\underline{X},Y)$, 
where $u(\underline{X},Y)$ is a unit in $\bK[[\underline{X},Y]]$. 
Then the Newton polytopes of $D_{f_1}$ and $\bD_{f_2}$ are equal.
\end{Theorem}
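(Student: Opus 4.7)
The plan is to reduce to the two-variable case by the support-function / monomial-substitution technique from the proof of Theorem~\ref{Th:2}, and then use the explicit formula of Property~\ref{orion}(iv) to match the Newton polytopes of $D_{g_1}$ and $\bD_{g_2}$ factor by factor. By Property~\ref{orion}(i), $\bD_{f_1}=D_{f_1}$, so the claim becomes $\Delta(D_{f_1})=\Delta(\bD_{f_2})$. To verify equality of the polytopes, I would compare their support functions on a dense set of positive rational directions $c=(\underline c,c_{d+1})\in\bR_{>0}^{d+1}$. After rescaling so that the $c_i$ are large positive integers, substitute $X_i=T^{c_i}$ and set $g_j(T,Y):=f_j(T^{\underline c},Y)$; then $g_1\in\bK[[T]][Y]$ is a Weierstrass polynomial and $g_2=\tilde u\,g_1$ in $\bK[[T,Y]]$, where $\tilde u(T,Y):=u(T^{\underline c},Y)$ is a unit. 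Property~\ref{orion}(iii) gives $\bD_{g_j}(T,V)=\bD_{f_j}(T^{\underline c},V)$, and the support-function argument from Theorem~\ref{Th:2} transforms the goal into the two-variable statement $\Delta(D_{g_1})=\Delta(\bD_{g_2})$.

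In two variables, Property~\ref{orion}(iv) produces
\[
D_{g_1}=w_1\prod_{i=1}^{n-1}\bigl(g_1(T,Y_i(T))-V\bigr),\qquad
\bD_{g_2}=w_2\prod_{i=1}^{n-1}\bigl(g_2(T,Z_i(T))-V\bigr),
\]
where $n=\deg_Y g_1$, the $w_j$ are units in $\bK[[T,V]]$, and $\{Y_i\}$, $\{Z_i\}$ are the $n-1$ Puiseux roots of $\partial g_1/\partial Y$ and $\partial g_2/\partial Y$, both being $Y$-regular of order $n-1$. Each linear factor $c(T)-V$ has Newton diagram the elementary polygon determined by $\ord c$, and Newton diagrams of products are Minkowski sums; since $\tilde u(0,0)\neq 0$ and $Z_i(0)=0$, one has $\ord g_2(T,Z_i)=\ord g_1(T,Z_i)$. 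Hence the polytope identity reduces to the multiset equality $\{\ord g_1(T,Y_i)\}=\{\ord g_1(T,Z_i)\}$.

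To match the roots, differentiate $f_2=uf_1$: $\partial g_2/\partial Y=\tilde u\,\partial g_1/\partial Y+g_1\,\partial\tilde u/\partial Y$. At a simple root $Y_i$ of $\partial g_1/\partial Y$ the first term vanishes, so $\partial g_2/\partial Y(T,Y_i)=g_1(T,Y_i)\,\partial\tilde u/\partial Y(T,Y_i)$ has order at least $\ord g_1(T,Y_i)$, while $\partial^2 g_2/\partial Y^2(T,Y_i)$ equals $\tilde u(T,Y_i)\,\partial^2 g_1/\partial Y^2(T,Y_i)$ plus lower-order terms and is nonzero. A Hensel/Newton iteration in the Puiseux ring then produces a root $Z_i$ of $\partial g_2/\partial Y$ near $Y_i$, and Taylor expansion using $\partial g_1/\partial Y(Y_i)=0$ yields $g_1(T,Z_i)-g_1(T,Y_i)=O((Z_i-Y_i)^2)$, of strictly greater order than $g_1(T,Y_i)$, whence $\ord g_1(T,Z_i)=\ord g_1(T,Y_i)$. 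The main obstacle will be the case of roots of $\partial g_1/\partial Y$ of multiplicity $m>1$, where a single $Y_i$ may split into $m$ different $Z_i$'s under the perturbation; I would address this by deforming $g_1$ generically so that its critical points become simple and passing to the limit by continuity of the Newton polytope in the coefficients, or alternatively by invoking the classical invariance of the Jacobian Newton polygon of a plane curve germ under change of equation.
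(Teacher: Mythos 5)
Your reduction to the two-variable case is exactly the paper's: the same monomial substitutions $X_i=T^{c_i}$ with large $c_i$, Property~\ref{orion}(i) and (iii), and the support-function argument of Theorem~\ref{Th:2} (with the one extra precaution, stated explicitly in the paper, that the direction $c$ must be chosen so that the supporting hyperplane meets \emph{each} of $\Delta(D_{f_1})$ and $\Delta(\bD_{f_2})$ in a single point). Where you differ is the two-variable core $\Delta(D_{g_1})=\Delta(\bD_{g_2})$. The paper does not prove this from scratch: it quotes Corollary~5.3 of \cite{GB-Gwo} together with Property~\ref{orion}(v), i.e.\ the known invariance of the Newton diagram of the discriminant of the projection $(T,g)$ under a change of the equation of the plane germ. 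So your ``alternative'' of invoking the classical invariance is not a fallback; it is the intended proof, and with that citation your argument closes.

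Your primary plan of matching the Puiseux roots of $\partial g_1/\partial Y$ and $\partial g_2/\partial Y$ one by one has a genuine gap that you only partly acknowledge. The multiple-root case is not a marginal nuisance: the polar of a singular germ typically has non-reduced or mutually tangent branches, and the proposed repair by generic deformation fails because the Newton polygon of the discriminant is not continuous in the coefficients. For $g=Y^3-T^4$ the diagram of $D_g$ has vertices $(8,0)$ and $(0,2)$, while for the perturbation $Y^3-3\epsilon^2Y-T^4$ both critical values $\mp2\epsilon^3-T^4$ are units, the discriminant is a unit, and the polygon collapses to the whole positive quadrant; no limiting argument recovers the original polygon. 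Even the simple-root step is incomplete: you need $2\,\ord(Z_i-Y_i)>\ord g_1(T,Y_i)$, and the Hensel bound $\ord(Z_i-Y_i)\ge\ord\bigl(g_1(T,Y_i)\,\partial\tilde u/\partial Y(T,Y_i)\bigr)-\ord\bigl(\partial^2g_2/\partial Y^2(T,Y_i)\bigr)$ does not yield this when $\partial^2 g_1/\partial Y^2$ vanishes to high order along the polar branch; moreover you would still have to check that $Y_i\mapsto Z_i$ is a bijection of multisets. Keep the reduction, drop the root matching, and cite the invariance result as the paper does.
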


\noindent \begin{proof}
Consider the substitution $g_i(T,Y)=f_i(T^{c_1},\ldots, T^{c_d},Y)$ for $i=1,2$.  Later on we assume that
$c_j\geq \deg f_1$ for $j=1,\ldots,d$. 

\noindent 
By  item (i) of Property~\ref{orion} we have $\bD_{f_1}=D_{f_1}$ and  $\bD_{g_1}=D_{g_1}$.
By Corollary 5.3 in \cite{GB-Gwo} and Property~\ref{orion}~(v)
we get $\Delta(\bD_{g_1})=\Delta(\bD_{g_2})$. In \cite{GB-Gwo} the above equality was proved in the convergent power series case. Anyway the methods in \cite{GB-Gwo} also work  for formal power series.

\noindent We finish the proof proceeding as in the proof of Theorem \ref{Th:2} replacing $\Delta(D_{f})$ by $\Delta(\bD_{f_2})$,  
$\Delta(D_g)$ by  $\Delta(\bD_{g_2})$, 
$\Delta_{T(f)}$ by $\Delta(\bD_{f_1})$ and 
$\Delta_{T(g)}$ by  $\Delta(\bD_{g_1})$.
\noindent The only difference is that we need to choose a vector $c=(c_1,\dots, c_d, c_{d+1})$
more carefully to assure that the hyperplanes 
$H_i = \{x\in\bR^{d+1} : \scalar{c}{x}=l(c,\Delta(\bD_{f_i}))\}$
support the Newton polyhedra  $\Delta(\bD_{f_i})$ at exactly
one point, for $i=1,2$.
\end{proof}

\begin{Corollary}
\label{cccoo}
Let $w(Y)$ be the Weierstrass polynomial of a $Y$-regular 
power series $f\in\bK[[X_1,\dots,X_d,Y]]$.
Then the following conditions are equivalent:

(i) the polynomial $w(Y)$ is quasi-ordinary,

(ii) the polytope $\Delta(D_w)\cap\bR^d\times\{0\}$ has only one vertex,

(iii) the polytope $\Delta(\bD_f)\cap\bR^d\times\{0\}$ has only one vertex,

(iv) $\bD_f(\underline{X},0)=u(\underline{X}) \cdot \mbox{monomial}$, where $u(0)\neq 0$.
\end{Corollary}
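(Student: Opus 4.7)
The plan is to derive (i) $\Leftrightarrow$ (ii) $\Leftrightarrow$ (iii) $\Leftrightarrow$ (iv) from a single elementary observation about Newton polytopes together with Theorem~\ref{TTT}, so that the bulk of the work is simply converting between the $V=0$ slice of the Newton polytope and the Newton polytope of the $V=0$ specialization.

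The observation I would rely on is that, for any power series $h \in \bK[[\underline X, V]]$, the slice $\Delta(h) \cap (\bR^d \times \{0\})$ coincides under the natural identification $\bR^d \times \{0\} \cong \bR^d$ with the Newton polytope $\Delta(h(\underline X, 0))$: any point $(\alpha, 0) \in \Delta(h)$ must dominate some support point $(\alpha', \beta')$ of $h$, forcing $\beta' = 0$ and hence $c_{\alpha', 0} \neq 0$. Combining this with the standard fact that the vertices of a Newton polytope in $\bR^d_{\geq 0}$ are exactly the componentwise-minimal elements of its support, one gets: the slice $\Delta(h) \cap (\bR^d \times \{0\})$ has a unique vertex if and only if $h(\underline X, 0) = \underline X^{\alpha_0} v(\underline X)$ for some unit $v \in \bK[[\underline X]]$.

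I would then apply this twice. First, because $D_w(\underline X, V)$ is defined as the $Y$-discriminant of $w(Y) - V$, the specialization $D_w(\underline X, 0)$ is simply the $Y$-discriminant of $w$; by the very definition of quasi-ordinary, (i) is equivalent to this discriminant being a monomial times a unit, which by the observation is (ii). Second, applying the observation to $h = \bD_f$ gives (iii) $\Leftrightarrow$ (iv). The remaining equivalence (ii) $\Leftrightarrow$ (iii) comes directly from Theorem~\ref{TTT} applied to the Weierstrass factorization $f = u \cdot w$ (with $u$ the unit produced by Weierstrass preparation): that theorem yields $\Delta(D_w) = \Delta(\bD_f)$ in $\bR^{d+1}$, and intersecting both sides with $\bR^d \times \{0\}$ preserves the equality and in particular the single-vertex property.

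I do not foresee a serious obstacle; the only care required is in the degenerate case in which $w$ fails to be square-free, so that $D_w \equiv 0$ and by Theorem~\ref{TTT} also $\bD_f \equiv 0$. In that case (i) fails by definition (a unit must be nonzero) and conditions (ii)--(iv) are vacuously false, so the equivalences still hold. Under the tacit square-free assumption all four slices are nonempty and the elementary observation applies verbatim.
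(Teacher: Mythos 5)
Your proposal is correct and follows essentially the same route as the paper: identify the slice $\Delta(h)\cap(\bR^d\times\{0\})$ with $\Delta(h(\underline X,0))$, note that a Newton polytope in $\bR^d_{\geq0}$ has a single vertex exactly when the series is a unit times a monomial (giving (i)$\Leftrightarrow$(ii) and (iii)$\Leftrightarrow$(iv)), and invoke Theorem~\ref{TTT} for (ii)$\Leftrightarrow$(iii). Your extra remarks on the degenerate non-square-free case are a harmless refinement not present in the paper.
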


\noindent  \begin{proof} 
The Newton polytope of a series $h\in\bK[[\underline{X}]]$ has only one vertex if and only if 
$h$ has a form $u(\underline{X}) \cdot \mbox{\em monomial}$, where $u(0)\neq 0$. 
Since $\Delta(D_w)\cap\bR^d\times\{0\}$ is the Newton polytope of $D_w(\underline{X},0)$ 
and likewise $\Delta(\bD_f)\cap\bR^d\times\{0\}$ is the Newton polytope of $\bD_f(\underline{X},0)$, 
we get equivalences (i)$\Leftrightarrow$(ii) and (iii)$\Leftrightarrow$(iv). 
The equivalence (ii)$\Leftrightarrow$(iii) follows from Theorem~\ref{TTT}
\end{proof}

\medskip

\noindent  We call a $Y$-{\em regular power series} $f$ {\em quasi-ordinary} if it satisfies any of equivalent conditions (i)--(iv) of Corollary \ref{cccoo}.
We follow here Lipman who used~(i) in~\cite{Lipman} as a definition of quasi-ordinary convergent power series with complex coefficients.

\medskip 
\noindent Using Theorem~\ref{TTT} we may generalize main results of this paper, that is: Theorem~\ref{Th:2}, Corollary~\ref{C:2}, Theorem~\ref{Th:6} and Theorem~\ref{Theo1}, to $Y$-regular quasi-ordinary power series. 

\appendix
\section{Appendix: Computing indices}
\label{appendix}
\noindent Let $M \subset L$ be lattices in $\bZ^d$, i.e. additive subgroups of $\bZ^d$. 
In this appendix we recall a method of computing the 
{\em index} of ${M}$ in ${N}$.
By definition the index
$[N:M]$ is the cardinality of the quotient group $N/M$.  
Since  $[\bZ^d:N]\cdot [N:M]=[\bZ^d:M]$ it is enough to compute $[\bZ^d:M]$ and 
$[\bZ^d: N]$. The next theorem says how to do it by means of determinants.

\begin{Theorem}
Let $M=\bZ v_1+\cdots+\bZ v_n$ be a sub-lattice of $\bZ^d$ of finite index. Then $[\bZ^d:M]$ is the 
greatest common divisor of minors of maximal size of the matrix build from vectors $v_1,\dots,v_n$. 
\end{Theorem}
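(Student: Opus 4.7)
The plan is to reduce the problem to the Smith Normal Form of the integer matrix $A\in\bZ^{d\times n}$ whose columns are $v_1,\ldots,v_n$. Because $M$ has finite index in $\bZ^d$, necessarily $n\geq d$ and $A$ has rank $d$ over $\bQ$. By the Smith Normal Form theorem there exist unimodular matrices $U\in \mathrm{GL}_d(\bZ)$ and $V\in \mathrm{GL}_n(\bZ)$ with
$$UAV = [\,\mathrm{diag}(d_1,\ldots,d_d) \mid 0\,],$$
where $d_1 \mid d_2 \mid \cdots \mid d_d$ are the positive elementary divisors of $A$. The plan is to show that both quantities in the statement, namely the index $[\bZ^d:M]$ and the gcd of maximal minors of $A$, equal the product $d_1 d_2 \cdots d_d$.

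For the index, I would note that the columns of $AV$ again generate $M$ (since $V$ is unimodular), while $U\colon\bZ^d\to\bZ^d$ is an automorphism. Therefore $U(M)$ is generated by the columns of $UAV$, namely $d_1e_1,\ldots,d_de_d,0,\ldots,0$. Hence $U(M)=d_1\bZ\oplus\cdots\oplus d_d\bZ$ and
$$[\bZ^d:M]=[\bZ^d:U(M)]=d_1 d_2 \cdots d_d.$$

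For the gcd of maximal minors I would invoke its invariance under unimodular row and column operations on $A$. Left multiplication by $U$ multiplies each $d\times d$ minor by $\det U=\pm 1$, so it preserves the gcd. Right multiplication by $V$ expresses each $d\times d$ minor of $AV$ as an integer linear combination of those of $A$ via the Cauchy--Binet formula; applying the same argument to $V^{-1}$ yields the reverse divisibility. Every $d\times d$ minor of $UAV$ that touches a zero column vanishes, so the only nonzero maximal minor is $\det(\mathrm{diag}(d_i))=d_1\cdots d_d$, which is therefore the gcd. By the invariance this also equals the gcd of the maximal minors of $A$, and combined with the computation of the index the proof is complete.

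The theorem rests on two classical ingredients: the existence of Smith Normal Form over $\bZ$ and the Cauchy--Binet formula. No serious obstacle arises; the only point requiring care is the Cauchy--Binet step, where one must verify that right multiplication by a unimodular $n\times n$ matrix induces an invertible integer-linear transformation on the tuple of maximal minors so that the gcd is preserved in both directions.
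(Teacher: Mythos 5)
Your proof is correct, and it takes a genuinely different route from the paper's. The paper forms the presentation $\bZ^n\stackrel{\phi}{\to}\bZ^d\to\bZ^d/M\to0$, identifies the ideal generated by the maximal minors of $A_\phi$ as the $0$-th Fitting ideal of $\bZ^d/M$, and then shows (via the structure theorem, using a diagonal presentation $\bZ^s\to\bZ^s\to B\to 0$) that the $0$-th Fitting ideal of any finite abelian group $B$ is generated by $\#B$. The load-bearing fact there is the invariance of Fitting ideals under change of presentation, which the paper invokes implicitly by comparing two different presentations of the same module. Your argument via Smith Normal Form makes exactly that invariance explicit and elementary in this special case: the Cauchy--Binet computation shows directly that the gcd of maximal minors is unchanged under unimodular row and column operations, and the diagonal form then exhibits both the index and the gcd as $d_1\cdots d_d$. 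The two proofs are morally the same (the structure theorem is itself a consequence of Smith Normal Form), but yours is self-contained modulo SNF and Cauchy--Binet, whereas the paper's is shorter at the cost of quoting the well-definedness of Fitting ideals from commutative algebra. Both are complete; your version has the minor advantage of not needing the module-theoretic machinery, and the paper's has the advantage of generalizing immediately to Fitting ideals over other rings.
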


\medskip\noindent
\begin{proof} 
Let $\phi :\bZ^n\to \bZ^d$ be a group homomorphism given by 
$\phi(m_1,\dots,m_n)=m_1v_1+\cdots+m_nv_n$. 
Since every abelian group can be considered as a $\bZ$ module, 
this homomorphism induces the exact sequence of $\bZ$ modules
$$ \bZ^n\stackrel{\phi}{\to}\bZ^d\to \bZ^d/M\to0 .
$$ 

\noindent As in linear algebra we can associate with the mapping $\phi$ the matrix $A_{\phi}$ whose 
columns are the vectors $v_1,\dots,v_n$. 
The ideal generated in $\bZ$ by the minors of maximal size of $A_{\phi}$ is by definition the 
0-th Fitting ideal of the $\bZ$ module $\bZ^d/M$.  

\noindent To complete  the proof it is enough to show a general statement: for every finite abelian group $B$, treated as an $\bZ$ module,  number of elements of $B$ is the generator of 
the 0-th Fitting ideal of $B$.

\medskip

\noindent By the structure theorem for finitely generated abelian groups, $B$ is isomorphic to the direct sum $\bZ/q_1\bZ\oplus\cdots\oplus\bZ/q_s\bZ$ for some $q_1, \dots, q_s\in\bZ$. 
Thus $B$, treated as a $\bZ$ module, allows a finite presentation 
$$ \bZ^s\stackrel{\phi}{\to}\bZ^s\to B\to0
$$ 
where $\phi(n_1,\dots,n_s)=(q_1n_1,\dots,q_sn_s)$. 
Since $A_\phi$ is a square matrix, its determinant is the only minor of the maximal size. 
Thus the the 0-th Fitting ideal of $B$ is generated by $\det A_\phi$.
Notice that the determinant of a diagonal matrix $A_{\phi}$ is equal 
to the product $q_1\cdots\, q_s$ which is the cardinality of $B$.
\end{proof}

\medskip
\noindent
{\small Evelia Rosa Garc\'{\i}a Barroso\\
Departamento de Matem\'atica Fundamental\\
Facultad de Matem\'aticas, Universidad de La Laguna\\
38271 La Laguna, Tenerife, Espa\~na\\
e-mail: ergarcia@ull.es}

\medskip

\noindent {\small Janusz Gwo\'zdziewicz\\
Department of Mathematics\\
Technical University \\
Al. 1000 L PP7\\
25-314 Kielce, Poland\\
e-mail: matjg@tu.kielce.pl}
\end{document}